\newenvironment{bsmallmatrix}
  {\left[\begin{smallmatrix}}
  {\end{smallmatrix}\right]}
\newtheorem{definition}{Definition}[section]
\newtheorem{theorem}{Theorem}[section]
\newtheorem{lemma}[theorem]{Lemma}
\newtheorem{corollary}[theorem]{Corollary}
\newenvironment{proof}[1][Proof]{\begin{trivlist}
\item[\hskip \labelsep {\bfseries #1}]}{\end{trivlist}}
\newenvironment{remark}[1][Remark]{\begin{trivlist}
\item[\hskip \labelsep {\bfseries #1}]}{\end{trivlist}}
\journal{Journal}
\begin{document}

\begin{frontmatter}



\title{Circulant preconditioners for functions of Hermitian Toeplitz matrices}

\author{Sean Hon\fnref{label1}}

\address{Mathematical Institute, University of Oxford, Radcliffe Observatory Quarter, Oxford, OX2 6GG, United Kingdom}
\ead{hon@maths.ox.ac.uk}
\fntext[label1]{The research of the author was partially supported by the Croucher Foundation of Hong Kong.}

\begin{abstract}
Circulant preconditioners for functions of matrices have been recently of interest. In particular, several authors proposed the use of the optimal circulant preconditioners as well as the superoptimal circulant preconditioners in this context and numerically illustrated that such preconditioners are effective for certain functions of Toeplitz matrices. Motivated by their results, we propose in this work the absolute value superoptimal circulant preconditioners and provide several theorems that analytically show the effectiveness of such circulant preconditioners for systems defined by functions of Toeplitz matrices. Namely, we show that the eigenvalues of the preconditioned matrices are clustered around $\pm 1$ and rapid convergence of Krylov subspace methods can therefore be expected. Moreover, we show that our results can be extended to functions of block Toeplitz matrices with Toeplitz blocks provided that the optimal block circulant matrices with circulant blocks are used as preconditioners. Numerical examples are given to support our theoretical results.
\end{abstract}

\begin{keyword}
Toeplitz matrices \sep functions of matrices \sep superoptimal circulant preconditioners \sep optimal circulant preconditioners \sep block matrices

\MSC[] 65F08 \sep 15A16 \sep 15B05
\end{keyword}

\end{frontmatter}


\section{Introduction}

Circulant preconditioners for functions of matrices have been used recently. Jin, Zhao, and Tam \cite{Jin2014224} proposed using the optimal circulant preconditioners in this context. Later, Bai, Jin, and Yao \cite{bai2015} also suggested the use of the superoptimal circulant preconditioners for the same problem. The authors provided several properties of functions of circulant matrices and then numerically demonstrated the effectiveness of their proposed preconditioners for certain functions of Toeplitz matrices. Note that functions of Toeplitz matrices have some crucial applications, for example in option pricing \cite{duffy2013finite, SACHS20081687} where the Toeplitz matrix exponentials arise.

Motivated by the authors' results, we first propose the use of absolute value superoptimal circulant preconditioners and provide several theoretical results that account for the success of such preconditioners for functions of Toeplitz matrices. We then show that the optimal circulant preconditioners are also effective for functions of Toeplitz matrices in the block matrix case. In other words, we provide in this work two special kinds of matrices for which the optimal type circulant preconditioners are successful, which broaden the use of such preconditioners in preconditioning for functions of matrices addressed by the authors in \cite{Jin2014224,bai2015}.

In our main results, we show that $|h(T_n)|^{-1}h(A_n)$ can be decomposed into the sum of a unitary matrix, a low rank matrix, and a small norm matrix for sufficiently large $n$, where $h(z)$ is an analytic function, $A_n$ is the Toeplitz matrix generated by a positive function in the Wiener class, and $T_n$ is the superoptimal circulant preconditioner derived from $A_n$. The clustered spectra of $|h(T_n)|^{-1}h(A_n)$ around $\pm 1$ can then be shown. As $|h(T_n)|$ is Hermitian positive definite by definition, Krylov subspace methods such as the minimal residual (MINRES) method can be employed for rapid convergence (see for example \cite{ANU:9672992,MR3235759}). When $T_n$ is replaced by Strang's circulant preconditioner \cite{Strang:1986:PTM:12330.12335} $S_n$, we show that similar results on $|h(S_n)|^{-1}h(A_n)$ also hold.



Moreover, considering the optimal circulant preconditioners, we further extend our previous results \cite{Hon2018148,Hon2018} to the block matrix case. Namely, we propose the absolute value optimal block circulant preconditioners with circulant blocks for functions of block Toeplitz matrices with Toeplitz blocks. Several theorems that show their spectra around $\pm 1$ are also given.

We remark that functions of Toeplitz matrices are not Toeplitz matrices in general (as a simple example consider $A_n^2$). The standard preconditioning techniques for Toeplitz systems do not straightforwardly apply to $h(A_n)$. However, when $h(z)=z$, the standard results provided in \cite{MR1098422} on the superoptimal circulant preconditioners for the usual Toeplitz systems are recovered. The same arguments also hold for the block Toeplitz matrix case.

By the diagonalisation of circulant matrices $C_n=U_n^{*}\Lambda_n U_n$, where $U_n\in \mathbb{C}^{n\times n}$ is the Fourier matrix of which the entries are given by $[U_n]_{jk}=\frac{1}{\sqrt{n}}e^{-2\pi \mathbf{i} j k/n}$ with $j,k=0,1,\dots,n-1$, we have $ |h(C_n)| = U_n^{*} |h(\Lambda_n)|U_n.$ In other words, $|h(C_n)|$ is also a circulant matrix. Therefore, for any vector $\mathbf{d}$ the product $|h(C_n)|^{-1}\mathbf{d}$ can by efficiently computed by several Fast Fourier Transforms (FFTs) in $\mathcal{O}(n\log{n})$ operations. 

It must be noted that fast matrix vector multiplication with $h(A_n)$ is not readily archived. However, for $e^{A_n}$ the matrix vector multiplication can be computed efficiently in $\mathcal{O}(n\log{n})$ operations for example in \cite{doi:10.1137/090758064}.

Numerical results by the conjugate gradient (CG) method,  MINRES, and the generalised minimal residual (GMRES) method are given to support our theoretical results and to demonstrate the clusters of eigenvalues around $\pm 1$ for the preconditioned matrices.

\section{Preliminary results on Toeplitz matrices}

In this section, we first present some preliminary results on $A_n$ that will be used in the next section. 

Assuming the given Toeplitz matrix $A_n$ is associated with the function $f$ via its Fourier series defined on $[-\pi,\pi]$, we have
$$ A_n=\begin{bmatrix}{}
a_0 & a_{-1} & \cdots & a_{-n+2} & a_{-n+1} \\
a_1 & a_0 & a_{-1}   &  & a_{-n+2} \\
\vdots & a_1 & a_0 & \ddots & \vdots \\
a_{n-2} &  & \ddots & \ddots & a_{-1} \\
a_{n-1} & a_{n-2} &\cdots & a_1 & a_0
\end{bmatrix} \in \mathbb{C}^{n\times n},$$ where
$$ a_{k}=\frac{1}{2\pi} \int_{-\pi} ^{\pi}f(x) e^{-\mathbf{i} k x } \,dx,\quad k=0,\pm1,\pm2,\dots,$$ are the Fourier coefficients of $f$. The function $f$ is called the \emph{generating function} of the Toeplitz matrix. We refer to \cite{MR2108963,MR2376196,Chan:1996:CGM:240441.240445} for more discussions on other aspects of Toeplitz matrices. 

Throughout this work, we assume that $f$ is a positive function in the Wiener class, namely $$\sum_{k=-\infty}^{\infty}|a_k|<\infty.$$ Thus, the corresponding matrix $A_n$ is Hermitian positive definite for all $n$.

We now introduce the optimal type circulant preconditioners in the following. Let
$
\mathcal{M}_{U_n}=\{ U_n^*\Lambda_n U_n~|~\Lambda_n\in \mathbb{C}^{n\times n}~\text{is any $n\times n$ diagonal matrix}\}
$
be the set of all circulant matrices.

The superoptimal preconditioner $T_n \in \mathbb{C}^{n\times n}$ proposed by Tyrtyshnikov \cite{doi:10.1137/0613030} for $ A_n$ is defined to be $$\min_{C_n \in \mathcal{M}_{U_n}} \|I_n-C_n^{-1}A_n\|_{F},$$ where $\|\cdot\|_F$ is the Frobenius norm. 

Similarly, the optimal circulant preconditioner ${c}(A_n) \in \mathbb{C}^{n\times n}$ by T. Chan \cite{doi:10.1137/0909051} for $ A_n $ is defined to be $$\min_{C_n \in \mathcal{M}_{U_n}} \|C_n-A_n\|_{F}.$$

We also provide the following definitions in relation to clustered spectra around $\pm 1$ and absolute value circulant preconditioners.

\begin{definition}\cite[Definition 4.5]{doi:10.1137/140974213}\label{def:clsutered_spec}
A sequence of matrices $\{H_n\}_{n=1}^{\infty}$ is said to have clustered spectrum around $\pm 1$ if for any $\epsilon>0$ there exist positive integers $M$ and $N$ such that for all $n>N$, at most $M$ eigenvalues $\lambda$ of $H_n$ are such that $|\lambda-1|>\epsilon$ and $|\lambda+1|>\epsilon$.
\end{definition}

\begin{definition}\cite{doi:10.1137/140974213}\label{def:abs_C}
Let $C_n \in \mathbb{C}^{n\times n}$ be a circulant matrix. The \emph{absolute value circulant matrix} $|C_n| \in \mathbb{C}^{n\times n}$ for $C_n$ is defined by
\begin{eqnarray}\nonumber
|C_n| &=& (C_n^* C_n)^{1/2} \\\nonumber
&=& (C_n C_n^*)^{1/2}\\\nonumber
&=&U_n^*|\Omega_n|U_n,
\end{eqnarray}
where $U_n \in \mathbb{C}^{n \times n}$ is a Fourier matrix and $|\Omega_n| \in \mathbb{R}^{n \times n}$ is the diagonal matrix in the eigendecomposition of $C_n$ with all entries replaced by their magnitude.
\end{definition} 

\begin{remark}
Note that $|C_n|$ is Hermitian positive definite by definition provided that $C_n$ is nonsingular.
\end{remark}

Given an analytic function $h$, $|h(C_n)|=U_n^*|h(\Omega_n)|U_n$ is well-defined and is also a circulant matrix by Definition \ref{def:abs_C}.

We will require the following lemma for showing Theorems \ref{lem:mianTAdecompose} and \ref{lem:mianSAdecompose} concerning the matrix decomposition involving a Toeplitz matrix and its corresponding superoptimal/Strang's circulant preconditioner.

%

%
%

\begin{lemma}\cite{MR1098422, doi:10.1137/0610039}
\label{lem:TCBound}
Let $f$ be a positive function in the Wiener class. Let $A_n \in \mathbb{C}^{n\times n} $ be the Toeplitz matrix generated by $f$, $T_n \in \mathbb{C}^{n\times n}$ be the superoptimal circulant preconditioner for $A_n$, and $S_n \in \mathbb{C}^{n \times n}$ be Strang's circulant preconditioner for $A_n$. Then
\[
\|A_n \|_2 \leq f_{\max},\quad \|T_{n} \|_2 \leq \frac{f_{\max}^2}{f_{\min}},\quad \text{and}\quad \|S_n \|_2 \leq f_{\max},
\quad n=1,2,\dots.
\]
\end{lemma}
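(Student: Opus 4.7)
The three inequalities are classical in the Toeplitz preconditioning literature and are cited from \cite{MR1098422, doi:10.1137/0610039}. My plan is to treat each bound separately, since they rely on different ingredients, and then remark that the last is the main obstacle.

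For the bound $\|A_n\|_2 \le f_{\max}$, I would exploit the Hermitian structure and pass to the Rayleigh quotient. Given a unit vector $v=(v_0,\dots,v_{n-1})^\top \in \mathbb{C}^n$, I would substitute the integral formula for $a_k$ and recognise
\begin{equation*}
v^* A_n v \;=\; \frac{1}{2\pi}\int_{-\pi}^{\pi} f(x)\,|p_v(x)|^2\,dx, \qquad p_v(x)=\sum_{k=0}^{n-1}v_k e^{\mathbf{i} kx}.
\end{equation*}
Since $f\le f_{\max}$ pointwise and Parseval gives $\tfrac{1}{2\pi}\int|p_v|^2\,dx=\|v\|^2=1$, the bound drops out immediately. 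An entirely analogous argument gives the lower bound $v^* A_n v\ge f_{\min}>0$, which shows $A_n$ is positive definite with $\|A_n^{-1}\|_2\le 1/f_{\min}$; I will need the latter fact momentarily.

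For $\|S_n\|_2\le f_{\max}$, I would use that Strang's preconditioner is circulant and its eigenvalues are the discrete Fourier transform of its first column, which consists of the central diagonals $a_{-\lfloor n/2\rfloor},\dots,a_{\lfloor n/2\rfloor}$ of $A_n$. This gives eigenvalues of the form $\sum_{|k|\le\lfloor n/2\rfloor}a_k e^{\mathbf{i} k\theta_j}$, i.e.\ values of the truncated Fourier series of $f$ at the grid points. Using the Wiener-class assumption $\sum|a_k|<\infty$, these truncated partial sums converge uniformly to $f$, and a standard argument (see Chan \cite{doi:10.1137/0610039}) gives the uniform bound $f_{\max}$.

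The main obstacle is the superoptimal bound $\|T_n\|_2 \le f_{\max}^2/f_{\min}$, since $T_n$ is defined implicitly through a Frobenius-norm minimisation rather than by a neat diagonal/matrix formula. The approach I would take is Tyrtyshnikov's explicit characterisation: for nonsingular $A_n$ the minimiser $T_n$ of $\|I_n-C_n^{-1}A_n\|_F$ can be written as $T_n = c(A_n A_n^*)\,c(A_n^*)^{-1}$, which for Hermitian $A_n$ collapses to $T_n = c(A_n^2)\,c(A_n)^{-1}$. I would then bound each factor in operator norm: the optimal circulant operator $c(\cdot)$ is a contraction in $\|\cdot\|_2$ on Hermitian matrices (eigenvalues of $c(B)$ are trigonometric averages of eigenvalues of $B$ for $B$ Hermitian Toeplitz), giving $\|c(A_n^2)\|_2\le\|A_n^2\|_2 \le f_{\max}^2$ by the first bound, while a Rayleigh quotient argument identical to the one above applied to $c(A_n)$ yields $c(A_n)\succeq f_{\min} I_n$ and hence $\|c(A_n)^{-1}\|_2\le 1/f_{\min}$. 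Multiplying, $\|T_n\|_2\le f_{\max}^2/f_{\min}$, completing the lemma.
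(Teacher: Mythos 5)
The paper never proves this lemma --- it is quoted from the cited references (Chan--Jin--Yeung for $T_n$, R.~Chan for $S_n$) --- so your proposal has to be judged against those sources, and for two of the three bounds it is essentially their argument. The Rayleigh-quotient identity $v^*A_nv=\frac{1}{2\pi}\int_{-\pi}^{\pi}f(x)|p_v(x)|^2\,dx$ correctly gives $f_{\min}\leq\lambda(A_n)\leq f_{\max}$, and your treatment of $T_n$ via Tyrtyshnikov's formula $T_n=c(A_nA_n^*)c(A_n^*)^{-1}$, combined with $\|c(B)\|_2\leq\|B\|_2$ and $c(A_n)\succeq f_{\min}I_n$ (which also guarantees the inverse in the formula exists), is exactly how the bound $f_{\max}^2/f_{\min}$ is obtained in the superoptimal reference. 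One small repair: the contraction property of $c(\cdot)$ has nothing to do with Toeplitz structure or ``trigonometric averages of eigenvalues''; it follows for any matrix $B$ from $c(B)=U_n^*\,\mathrm{diag}(U_nBU_n^*)\,U_n$, each diagonal entry being a Rayleigh quotient $u_j^*Bu_j$ of modulus at most $\|B\|_2$.

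The genuine gap is the Strang bound. Your argument --- eigenvalues of $S_n$ are partial Fourier sums of $f$ at grid points, and these converge uniformly to $f$ --- only yields $\|S_n\|_2\leq f_{\max}+\epsilon_n$ with $\epsilon_n\to0$, i.e.\ the bound for all sufficiently large $n$, not the clean inequality for every $n$ claimed in the statement. In fact the clean inequality can fail for small $n$: take $f(x)=3.1+2\cos x-\cos 2x$, which is positive with $f_{\max}=4.6$, yet $S_3$ has the eigenvalue $a_0+a_1+a_{-1}=5.1>f_{\max}$, since partial sums of a positive Wiener-class function need not stay below its maximum. So uniform convergence cannot be converted into the stated uniform-in-$n$ bound; what the cited paper actually proves (and what can be proved) is the asymptotic version, or alternatively the crude but genuinely uniform bound $\|S_n\|_2\leq\sum_{k}|a_k|$. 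This looseness is inherited from the lemma as quoted rather than introduced by you, and it is harmless downstream because the results that use it (Theorem~\ref{lem:mianSAdecompose}, Theorem~\ref{thm:mainSAdecomp_hon17} and Corollaries~\ref{coro:mainST_hon17}--\ref{coro:mainST_clusters_hon17}) are themselves ``for all $n>N$'' statements; but your write-up should either prove the $S_n$ bound in that asymptotic form or substitute the $\sum_k|a_k|$ bound, rather than assert that uniform convergence ``gives the uniform bound $f_{\max}$''.
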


%
%

%

The following theorem is in fact a restatement of Theorem 5 in \cite{MR1098422}, which will be used for showing our main theorem in the next section.

\begin{theorem}\cite[Theorem~5]{MR1098422}
\label{lem:mianTAdecompose}
Let $f$ be a positive function in the Wiener class. Let $A_n \in \mathbb{C}^{n \times n}$ be the Toeplitz matrix generated by $f$ and $T_n \in \mathbb{C}^{n \times n}$ be the superoptimal circulant preconditioner for $A_n$. Then for all $\epsilon>0$ there exist integers $N$ and $M>0$ such that for all $n>N$
\[T_n  - A_n  = V_n + W_n,\]where
\[ \text{rank}(V_n) \leq 2M \quad \text{and} \quad \|W_n\|_2 \leq \epsilon. \]
\end{theorem}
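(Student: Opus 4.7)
The plan is to exploit the rapid decay of the Fourier coefficients guaranteed by the Wiener class hypothesis $\sum_k|a_k|<\infty$, via a two-part splitting of $f$ into a trigonometric polynomial of fixed degree $M$ plus a small-norm tail. Given $\epsilon>0$, I would choose $M=M(\epsilon)$ large enough that $\sum_{|k|>M}|a_k|<\epsilon'$, where $\epsilon'>0$ is an auxiliary constant depending on $\epsilon$, $f_{\max}$, and $f_{\min}$. Writing $f=g_M+r_M$ with $g_M(x)=\sum_{|k|\leq M}a_k e^{\mathbf{i} k x}$ induces a decomposition $A_n=B_n+R_n$, where $B_n$ is the Hermitian banded Toeplitz matrix of bandwidth $M$ generated by $g_M$, and $\|R_n\|_2\leq \|r_M\|_\infty\leq \sum_{|k|>M}|a_k|<\epsilon'$ uniformly in $n$.

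Next, denoting by $\widetilde{T}_n$ the superoptimal circulant preconditioner of $B_n$, I would write $T_n-A_n=(T_n-\widetilde{T}_n)+(\widetilde{T}_n-B_n)-R_n$. For the banded Toeplitz matrix $B_n$, the classical argument in \cite{MR1098422} shows that $\widetilde{T}_n-B_n$ is supported in the top-right and bottom-left corner blocks of size $O(M)$ whenever $n>2M$, so $\mathrm{rank}(\widetilde{T}_n-B_n)\leq 2M$ after absorbing constants into a redefinition of $M$; this produces the low-rank summand $V_n=\widetilde{T}_n-B_n$.

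The main obstacle is that the map $A_n\mapsto T_n$ is \emph{nonlinear}, so $T_n-\widetilde{T}_n$ cannot be bounded by any direct linearity argument. To handle this I would invoke Tyrtyshnikov's explicit representation, which for Hermitian inputs reads $T_n=c(A_n^2)\,c(A_n)^{-1}$ with $c(\cdot)$ denoting T.~Chan's linear optimal circulant operator, and then apply the telescoping identity
\[
T_n-\widetilde{T}_n=\bigl[c(A_n^2)-c(B_n^2)\bigr]c(A_n)^{-1}+c(B_n^2)\bigl[c(A_n)^{-1}-c(B_n)^{-1}\bigr],
\]
together with the resolvent identity $c(A_n)^{-1}-c(B_n)^{-1}=-c(A_n)^{-1}c(R_n)c(B_n)^{-1}$ and the linear expansion $c(A_n^2)-c(B_n^2)=c(B_nR_n+R_nB_n+R_n^2)$. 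Each factor is then bounded in operator norm via Lemma~\ref{lem:TCBound}, applied both to $f$ and to $g_M$ (whose upper and lower bounds approach those of $f$ for large $M$), together with $\|R_n\|_2<\epsilon'$. Choosing $\epsilon'$ sufficiently small in terms of these bounds yields $\|T_n-\widetilde{T}_n\|_2+\|R_n\|_2\leq \epsilon$, so setting $W_n=(T_n-\widetilde{T}_n)-R_n$ completes the decomposition $T_n-A_n=V_n+W_n$ with the desired bounds for all $n>N(\epsilon)$.
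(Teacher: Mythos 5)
Note first that the paper itself does not prove this statement; it is quoted verbatim as Theorem~5 of \cite{MR1098422}, so your argument has to stand on its own. It does not, because the decisive step fails. You claim that for the banded Toeplitz matrix $B_n$ (symbol $g_M$) the superoptimal preconditioner $\widetilde{T}_n$ satisfies that $\widetilde{T}_n-B_n$ is supported in two $O(M)\times O(M)$ corner blocks, hence has rank at most $2M$. That corner-support property belongs to Strang's circulant $S_n$ (and, up to an extra band correction of norm $O(M/n)$, to T.~Chan's optimal circulant), not to the superoptimal one. Indeed, by the very representation you use later, $\widetilde{T}_n=c(B_n^2)\,c(B_n)^{-1}$, and $c(B_n)^{-1}$ is a dense circulant, so $\widetilde{T}_n$ is a dense circulant; since a circulant is constant along whole wrapped diagonals, any circulant whose difference from $B_n$ were genuinely confined to the corner blocks would have to coincide with Strang's $S_n$, which the superoptimal preconditioner does not. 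Worse, at exactly this point you invoke ``the classical argument in \cite{MR1098422}'' --- but that argument is the theorem you are being asked to prove (specialised to a trigonometric polynomial symbol); its whole content is that $T_n-A_n$ is low-rank plus small-norm \emph{despite not being} corner-supported. So the appeal is circular and the rank bound on $V_n$ is never established.

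The remaining ingredients are sound or easily repaired: the tail splitting $A_n=B_n+R_n$ with $\|R_n\|_2\leq\sum_{|k|>M}|a_k|$, the Hermitian formula $T_n=c(A_n^2)c(A_n)^{-1}$, and the telescoping/resolvent bound for $T_n-\widetilde{T}_n$ are all correct, although the norm estimates need $\|c(X)\|_2\leq\|X\|_2$ and $f_{\min}\leq\lambda_j(c(A_n))$, which are standard facts but are not contained in Lemma~\ref{lem:TCBound}. However, once you have $T_n=c(A_n^2)c(A_n)^{-1}$, the banded reduction buys you nothing, because the hard part (the low-rank piece) is no easier for $g_M$ than for $f$. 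The workable route --- and essentially the one in \cite{MR1098422} --- is to write $T_n-A_n=\bigl\{[c(A_n^2)-A_n^2]+A_n[A_n-c(A_n)]\bigr\}c(A_n)^{-1}$, apply the known optimal-preconditioner decompositions (low rank plus small norm) to the Wiener-class symbols $f$ and $f^2$, use that $A_n(f)^2-A_n(f^2)$ is itself low-rank plus small-norm, and finally note that multiplication by the uniformly bounded matrices $A_n$ and $c(A_n)^{-1}$ preserves such a decomposition. Your banded splitting could only be salvaged by proving a statement of exactly this type for $g_M$, i.e.\ by redoing the theorem, not by a corner-support observation.
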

%
%

Similar to the superoptimal circulant preconditioners, the following theorem are needed to show our results on Strang's circulant preconditioners for functions of Toeplitz matrices.

%
%

\begin{theorem}\cite[Theorem~2]{doi:10.1137/0610039}
\label{lem:mianSAdecompose}
Let $f$ be a positive function in the Wiener class. Let $A_n \in \mathbb{C}^{n \times n}$ be the Toeplitz matrix generated by $f$ and $S_n \in \mathbb{C}^{n \times n}$ be Strang's circulant preconditioner for $A_n$. Then for all $\epsilon>0$ there exist positive integers $N$ and $M$ such that for all $n>N$
\[S_n  - A_n  = V_n + W_n,\]where
\[ \text{rank}(V_n) \leq 2M \quad \text{and} \quad\|W_n\|_2 \leq \epsilon. \]
\end{theorem}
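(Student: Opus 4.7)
The plan is to exploit the Wiener-class hypothesis by splitting the generating function $f$ into a band-limited part plus a tail of small Wiener norm, and transferring the splitting to $A_n$ and $S_n$ separately. Given $\epsilon>0$, since $\sum_{k\in\mathbb{Z}} |a_k|<\infty$, I would pick $M$ so that $\sum_{|k|>M}|a_k|<\epsilon/2$, set $f_1(x)=\sum_{|k|\le M}a_k e^{\mathbf{i}kx}$ and $f_2=f-f_1$, and let $A_n^{(j)}$, $S_n^{(j)}$ (for $j=1,2$) denote the corresponding Toeplitz matrices and Strang preconditioners. Linearity of both constructions gives $S_n-A_n=(S_n^{(1)}-A_n^{(1)})+(S_n^{(2)}-A_n^{(2)})$, and I would then define $V_n:=S_n^{(1)}-A_n^{(1)}$ and $W_n:=S_n^{(2)}-A_n^{(2)}$.

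The rank bound for $V_n$ would follow from a direct entry-wise inspection. The matrix $A_n^{(1)}$ is banded Hermitian Toeplitz with half-bandwidth at most $M$, while $S_n^{(1)}$ wraps the same band cyclically. Provided $n>2M$, which is where the threshold $N$ enters, the two matrices coincide everywhere outside the upper-right and lower-left triangular corners of size at most $M\times M$, and each such corner is supported on at most $M$ rows. Hence $\text{rank}(V_n)\le 2M$.

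For $W_n$ I would apply the triangle inequality $\|W_n\|_2\le\|S_n^{(2)}\|_2+\|A_n^{(2)}\|_2$. The spectral norm of the Hermitian Toeplitz matrix $A_n^{(2)}$ is bounded by $\|f_2\|_\infty\le \sum_{|k|>M}|a_k|<\epsilon/2$, and the circulant $S_n^{(2)}$ has eigenvalues given by a DFT of its first-column sequence, which for $n>2M$ consists of a subsequence of the $a_k$ with $|k|>M$; hence $\|S_n^{(2)}\|_2\le\sum_{|k|>M}|a_k|<\epsilon/2$ as well, giving $\|W_n\|_2<\epsilon$. The main difficulty is really just bookkeeping: $n$ must be large enough that the two corner regions of $V_n$ do not overlap, which forces $N$ to depend on $\epsilon$ through $M$, and one must verify that Strang's wraparound genuinely produces first-column coefficients controlled by the tail of the Fourier series, both of which are straightforward once the splitting is in place.
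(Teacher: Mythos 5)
Your argument is correct: the band-splitting of $f$ into a degree-$M$ trigonometric polynomial plus a Wiener tail, with the corner structure of $S_n^{(1)}-A_n^{(1)}$ giving rank at most $2M$ for $n>2M$ and the tail terms bounded in spectral norm by $\sum_{|k|>M}|a_k|$, is exactly the classical proof of this result. The paper itself offers no proof, merely citing Theorem~2 of R.~Chan's work, and your reconstruction coincides with the standard argument given there, so nothing is missing.
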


\section{Main results}
In this section, we show that the preconditioned matrix $|h(T_n )|^{-1}h(A_n )$ can be decomposed into the sum of a unitary matrix, a low rank matrix, and a small norm matrix for sufficiently large $n$ under certain conditions. As a result, the spectra of $|h(T_n )|^{-1}h(A_n )$ are clustered around $\pm 1$.

We first provide two theorems concerning functions of matrices that will be useful in our analysis.


%
%
%
%
%
%

\begin{theorem}\cite[Theorem~4.7]{MR2396439}
\label{lem:matrixtaylorseries}
Suppose $h$ has a Taylor series expansion \[ h(z)=\sum_{k=0}^{\infty}a_k(z-\alpha)^k, \] where $a_k=\frac{h^{(k)}(\alpha)}{k!}$, with the radius of convergence $r$. If $A_n\in \mathbb{C}^{n\times n}$, then $h(A_n)$ is defined and is given by \[h(A_n) = \sum_{k=0}^{\infty}a_k(A_n-\alpha I_n)^{k}\] if and only if the distinct eigenvalues $\lambda_1$, $\cdots$, $\lambda_s$ of $A_n$ satisfy one of the conditions

(a) $|\lambda_i-\alpha|<r$,
\vspace{1mm}

(b) $|\lambda_i-\alpha|=r$ and the series for $h^{(n_i-1)}(\lambda)$, where $n_i$ is the index of $\lambda_i$, is convergent at the point $\lambda=\lambda_i$, $i=1,\dots,s$.
\end{theorem}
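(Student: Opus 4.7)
The plan is to reduce convergence of the matrix power series to scalar convergence questions on individual Jordan blocks. First, I pass to the Jordan canonical form $A_n = X J X^{-1}$ with $J = \mathrm{diag}(J_1, \ldots, J_p)$. Since similarity transformations commute with polynomial evaluation, the partial sums satisfy
\[
\sum_{k=0}^{K} a_k (A_n - \alpha I_n)^k = X \, \mathrm{diag}\!\left( \sum_{k=0}^{K} a_k (J_i - \alpha I)^k \right)_{i=1}^{p} X^{-1},
\]
so convergence of the series for $h(A_n)$ is equivalent to convergence of the series on each Jordan block, decoupling the problem across distinct eigenvalues.

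Next, for a block $J_i$ of size $m_i \times m_i$ at eigenvalue $\lambda_i$, I write $J_i - \alpha I = (\lambda_i - \alpha) I + E$, where $E$ is the nilpotent shift with $E^{m_i} = 0$. Expanding by the binomial theorem, collecting powers of $E$, and using $a_k = h^{(k)}(\alpha)/k!$ together with the substitution $m = k - j$, I obtain
\[
\sum_{k=0}^{K} a_k (J_i - \alpha I)^k = \sum_{j=0}^{m_i - 1} \frac{E^j}{j!} \sum_{m=0}^{K-j} \frac{h^{(m+j)}(\alpha)}{m!} (\lambda_i - \alpha)^{m}.
\]
Since $I, E, \ldots, E^{m_i - 1}$ are linearly independent, the matrix partial sums converge as $K \to \infty$ if and only if, for each $j = 0, 1, \ldots, m_i - 1$, the scalar Taylor series of $h^{(j)}$ at $\lambda_i$ expanded about $\alpha$ converges.

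It then remains to translate this equivalence into conditions (a) and (b). If $|\lambda_i - \alpha| < r$, every derivative series of $h$ converges absolutely inside the open disk of convergence, giving (a) immediately. If $|\lambda_i - \alpha| = r$, the boundary case must be handled carefully. Noting that the index $n_i$ equals the size of the largest Jordan block at $\lambda_i$, the highest derivative order actually required is $n_i - 1$. The key technical step is to show that convergence of the Taylor series of $h^{(n_i - 1)}$ at $\lambda_i$ forces convergence of the Taylor series of $h^{(j)}$ at $\lambda_i$ for every $0 \leq j < n_i - 1$. This I would handle by an Abel summation argument: the $(j-1)$-th derivative series arises from the $j$-th by multiplying term $m$ by a factor of the form $(\lambda_i - \alpha)/(m+1)$, and since $(m+1)^{-1}$ decreases monotonically to zero, summation by parts turns convergence of one series into convergence of the next. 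Iterating from $j = n_i - 1$ down to $j = 0$ yields case (b), and the reverse implication is immediate by reading off the $E^{n_i - 1}$ coefficient of the matrix series. Assembling $h(A_n) = X \, \mathrm{diag}(h(J_i)) \, X^{-1}$ completes the proof.

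I expect the main obstacle to lie precisely in this boundary comparison, since the behaviour of a power series on its circle of convergence is notoriously delicate; the Abel-type descent works here only because the weights $(m+1)^{-1}$ are monotone and summable in a useful sense, which is in turn why the hypothesis in (b) need only be imposed on the single highest derivative order $n_i - 1$.
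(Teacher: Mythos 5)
Your argument is correct, and it is essentially the standard proof of this result: the paper itself gives no proof (it quotes Theorem~4.7 of Higham's \emph{Functions of Matrices}, reference \cite{MR2396439}), and the proof there proceeds exactly as you do, by reducing to Jordan blocks, identifying the $E^j$ coefficient with the Taylor series of $h^{(j)}$ at $\lambda_i$, and handling the boundary case $|\lambda_i-\alpha|=r$ by the Dirichlet/Abel summation descent from the $(n_i-1)$-th derivative series to the lower-order ones. The only point left implicit in your write-up is excluding $|\lambda_i-\alpha|>r$, which follows at once from your own framework since the $E^0$ coefficient series then diverges.
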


\begin{theorem}\cite[Theorem~4.8]{MR2396439}
\label{lem:matrixtaylorserieserrorbd}
Suppose $h$ has a Taylor series expansion \[ h(z)=\sum_{k=0}^{\infty}a_k(z-\alpha)^k, \] where $a_k=\frac{h^{(k)}(\alpha)}{k!}$, with the radius of convergence $r$. If $A_n\in \mathbb{C}^{n\times n}$ with $\rho(A_n-\alpha I_n)<r$ then for any matrix norm $\|\cdot\|$ \[\|h(A_n) - \sum_{k=0}^{K-1}a_k(A_n-\alpha I_n)^{k}\| \leq \frac{1}{K!}\max_{0\leq t \leq 1} {\|(A_n-\alpha I_n)^K h^{(K)}(\alpha I_n + t (A_n-\alpha I_n)) \|}. \]
\end{theorem}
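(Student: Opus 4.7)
The plan is to reduce the claimed matrix bound to the scalar Taylor remainder identity and then lift the result to matrices via term-by-term manipulation of absolutely convergent power series. First, since $\rho(A_n-\alpha I_n)<r$, Theorem~\ref{lem:matrixtaylorseries} gives the convergent representation $h(A_n)=\sum_{k=0}^{\infty}a_k (A_n-\alpha I_n)^k$. Subtracting the first $K$ terms and factoring $(A_n-\alpha I_n)^K$ out of the tail yields
\[ h(A_n)-\sum_{k=0}^{K-1}a_k(A_n-\alpha I_n)^k \;=\; (A_n-\alpha I_n)^K \sum_{j=0}^{\infty}a_{K+j}(A_n-\alpha I_n)^j. \]

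Next I would establish the scalar integral remainder identity
\[ \sum_{j=0}^{\infty}a_{K+j}(z-\alpha)^j \;=\; \frac{1}{(K-1)!}\int_0^1(1-t)^{K-1}h^{(K)}(\alpha+t(z-\alpha))\,dt \]
valid for $|z-\alpha|<r$. This follows by expanding $h^{(K)}(\alpha+t(z-\alpha))=\sum_{j\geq 0}a_{K+j}\tfrac{(K+j)!}{j!}t^j(z-\alpha)^j$ and applying the Beta integral $\int_0^1(1-t)^{K-1}t^j\,dt = \tfrac{j!(K-1)!}{(K+j)!}$. The same argument then lifts to matrices: apply Theorem~\ref{lem:matrixtaylorseries} to $h^{(K)}$ at the point $\alpha I_n + t(A_n-\alpha I_n)$, whose spectral radius satisfies $t\rho(A_n-\alpha I_n)<r$ for all $t\in[0,1]$; left-multiply by $(A_n-\alpha I_n)^K$; and interchange the matrix summation with the $t$-integration.

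Finally, taking $\|\cdot\|$ on both sides and pulling the norm inside the integral yields
\[ \left\|h(A_n)-\sum_{k=0}^{K-1}a_k(A_n-\alpha I_n)^k\right\| \leq \frac{1}{(K-1)!}\int_0^1(1-t)^{K-1}\left\|(A_n-\alpha I_n)^K h^{(K)}(\alpha I_n + t(A_n-\alpha I_n))\right\|\,dt. \]
Bounding the integrand by its maximum over $t\in[0,1]$ and using $\int_0^1(1-t)^{K-1}\,dt = 1/K$ produces the prefactor $1/K!$, giving the stated estimate.

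The main technical obstacle is the interchange of summation and integration in the matrix-valued series. Because $\rho(A_n-\alpha I_n)<r$, one can pick a submultiplicative norm in which $\|A_n-\alpha I_n\|<r$, so the series $\sum_{j\geq 0}a_{K+j}\tfrac{(K+j)!}{j!}t^j(A_n-\alpha I_n)^j$ is dominated by a convergent scalar series uniformly on $t\in[0,1]$; equivalence of matrix norms on $\mathbb{C}^{n\times n}$ then transfers this uniform convergence to the norm $\|\cdot\|$ appearing in the statement, validating the interchange.
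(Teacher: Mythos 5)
This statement is quoted in the paper directly from Higham's book (Theorem~4.8 there, due to Mathias), and the paper supplies no proof of its own, so there is nothing internal to compare against. Your argument is correct and is essentially the standard proof of that result: the factorization of the tail as $(A_n-\alpha I_n)^K\sum_{j\ge 0}a_{K+j}(A_n-\alpha I_n)^j$, the Beta-integral identity giving the integral form of the remainder, the lift to matrices via Theorem~\ref{lem:matrixtaylorseries} applied to $h^{(K)}$ at $\alpha I_n+t(A_n-\alpha I_n)$ (legitimate since $t\,\rho(A_n-\alpha I_n)<r$ for $t\in[0,1]$ and $h^{(K)}$ has the same radius of convergence), and the final bound $\bigl\|\int_0^1\cdot\,dt\bigr\|\le\int_0^1\|\cdot\|\,dt$ together with $\int_0^1(1-t)^{K-1}dt=1/K$ all check out; you are also right that submultiplicativity is needed only in the auxiliary uniform-convergence step, so the conclusion holds for an arbitrary matrix norm as stated. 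The only stylistic difference from the textbook route is that one can obtain the same integral remainder slightly more directly by Taylor-expanding the matrix-valued function $G(t)=h(\alpha I_n+t(A_n-\alpha I_n))$ in the scalar variable $t$ about $t=0$ and evaluating at $t=1$, whose $K$-th derivative is exactly $(A_n-\alpha I_n)^K h^{(K)}(\alpha I_n+t(A_n-\alpha I_n))$; this bypasses the explicit interchange of summation and integration that you justify by norm equivalence, but both treatments are sound.
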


We now show our main results on $|h(T_n)|^{-1}h(A_n) $. Without loss of generality, we assume that $h(z)$ has the Taylor series representation
$$
h(z)=\sum_{k=0}^{\infty}a_k z^k.
$$


\begin{theorem}
\label{thm:mainTAdecomp_hon17}
Suppose $h(z)$ is an analytic function defined on $|z|<r$ with the radius of converge $r$. Let $f$ be a positive function in the Wiener class such that $\frac{f_{\max}^2}{f_{\min}}<r$. Let $A_n\in \mathbb{C}^{n \times n}$ be the Toeplitz matrix generated by $f$ and $T_n\in \mathbb{C}^{n \times n}$ be the superoptimal circulant preconditioner for $A_n$. Then for all $\epsilon > 0$ there exist positive integers $N$ and $M$ such that for all $n > N$ \[ h({T_n}) - h({A_n}) = R_n + E_n,\] where \[ \text{rank}( R_n) \leq 2M \quad \text{and}\quad \|E_n \|_2 \leq \epsilon.\]
\end{theorem}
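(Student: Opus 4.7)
The plan is to reduce the theorem to the polynomial case by Taylor-truncating $h$, and then to leverage Theorem \ref{lem:mianTAdecompose} on the difference $T_n - A_n$ inside each monomial via a telescoping identity.

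First I would invoke Lemma \ref{lem:TCBound} to obtain uniform (in $n$) spectral bounds $\|A_n\|_2 \leq f_{\max}$ and $\|T_n\|_2 \leq f_{\max}^2/f_{\min} < r$. These bounds guarantee $\rho(A_n), \rho(T_n) < r$, so Theorem \ref{lem:matrixtaylorseries} ensures that both $h(A_n)$ and $h(T_n)$ are given by the convergent series $\sum_{k=0}^\infty a_k A_n^k$ and $\sum_{k=0}^\infty a_k T_n^k$. Using Theorem \ref{lem:matrixtaylorserieserrorbd} with the uniform spectral radius bounds, I would choose a truncation order $K$ (depending only on $\epsilon$, $f$, and $h$, not on $n$) such that, writing $p_K(z) = \sum_{k=0}^{K-1} a_k z^k$, one has
\[
\|h(A_n) - p_K(A_n)\|_2 < \epsilon/3, \qquad \|h(T_n) - p_K(T_n)\|_2 < \epsilon/3,
\]
for every $n$. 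This reduces the problem to analysing $p_K(T_n) - p_K(A_n)$.

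Next, for each $k \leq K-1$, I would use the standard telescoping identity
\[
T_n^k - A_n^k \;=\; \sum_{j=0}^{k-1} T_n^{\,j} (T_n - A_n) A_n^{\,k-1-j},
\]
and substitute the Theorem \ref{lem:mianTAdecompose} decomposition $T_n - A_n = V_n + W_n$, where $\operatorname{rank}(V_n) \leq 2M_0$ and $\|W_n\|_2 \leq \epsilon'$, with $\epsilon'$ to be chosen. The $V_n$-terms have rank at most $2M_0$ each, contributing a total rank at most $2M_0 \sum_{k=1}^{K-1} k |a_k| / |a_k| \leq 2 M_0 K(K-1)/2$ summands, each of rank $\leq 2M_0$; since $K$ is fixed, this yields a combined low-rank matrix $R_n$ with $\operatorname{rank}(R_n) \leq 2M$ for some $M$ depending on $K$ and $M_0$. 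The $W_n$-terms are bounded in spectral norm by
\[
\Bigl\| \sum_{k=1}^{K-1} a_k \sum_{j=0}^{k-1} T_n^{\,j} W_n A_n^{\,k-1-j} \Bigr\|_2 \leq \epsilon' \sum_{k=1}^{K-1} |a_k|\, k\, \bigl(\tfrac{f_{\max}^2}{f_{\min}}\bigr)^{k-1},
\]
which is a finite constant (independent of $n$) times $\epsilon'$. Choosing $\epsilon'$ small enough makes this $< \epsilon/3$, giving a small-norm remainder, and combining with the two Taylor-tail errors yields the required decomposition $h(T_n) - h(A_n) = R_n + E_n$ with $\|E_n\|_2 \leq \epsilon$.

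The main obstacle I expect is bookkeeping the two independent error budgets: the truncation order $K$ must be chosen \emph{before} the rank from Theorem \ref{lem:mianTAdecompose} is fixed, because the final rank bound $2M$ must absorb a factor depending on $K$, while the norm $\|W_n\|_2$ must then be chosen small relative to the polynomial growth $\sum k|a_k| (f_{\max}^2/f_{\min})^{k-1}$. The key conceptual point making this work is that the uniform bound $f_{\max}^2/f_{\min} < r$ allows a single $K$ (independent of $n$) to control the Taylor tail on \emph{both} $A_n$ and $T_n$ simultaneously; without this hypothesis one could not separate the scales.
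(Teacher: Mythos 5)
Your proposal follows essentially the same route as the paper's proof: uniform spectral bounds from Lemma \ref{lem:TCBound} together with Theorems \ref{lem:matrixtaylorseries} and \ref{lem:matrixtaylorserieserrorbd} to truncate both Taylor series at an $n$-independent order $K$, then the telescoping identity with the splitting $T_n-A_n=V_n+W_n$ from Theorem \ref{lem:mianTAdecompose} to separate the polynomial difference into a low-rank part and a small-norm part, with the error budgets ordered exactly as you describe. The only (minor) deviation is in the rank count: you bound $\text{rank}(R_n)$ by subadditivity over the finitely many telescoped terms (your expression ``$2M_0\sum_k k|a_k|/|a_k|$'' is garbled, but the intended $n$-independent bound of roughly $2M_0K(K-1)/2$ is valid), whereas the paper exploits the corner sparsity structure of $T_n^{j}V_nA_n^{k-1-j}$ to obtain the sharper bound $2KM_1$; both suffice for the statement as claimed.
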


\begin{proof}

Since $h(z)$ is analytic on $|z|<r$, it has the following Taylor series representation: $ h(z)=\sum_{k=0}^{\infty}a_k z^k $ with the radius of convergence $r=(\lim_{k\to\infty}|\frac{a_{k+1}}{a_{k}}|)^{-1}$. By the assumption that $\frac{f_{\max}^2}{f_{\min}}<r$ and Lemma \ref{lem:TCBound}, we have 
 $$
 r>\frac{f_{\max}^2}{f_{\min}}> \|T_n\|_2>\max_j{|\lambda_j(T_n)|}>{|\lambda_j(T_n)|}\quad \text{for}\quad j=1,2,\dots,n,
 $$
 where $\lambda_j(T_n)$ denotes the $j$-th eigenvalue of $T_n$. By Theorem \ref{lem:matrixtaylorseries}, 
$
h(T_n) = \sum_{k=0}^{\infty}a_kT_n^{k}
$
is well-defined. Similarly,
$
h(A_n) = \sum_{k=0}^{\infty}a_kA_n^{k}.
$

We can  now decompose \[ h(T_n)-h(A_n) = \underbrace{h(T_n)-\sum_{k=0}^{K}a_kT_n^k}_{\Delta_n^{(1)}} + \underbrace{\sum_{k=0}^{K}a_kT_n^k - \sum_{k=0}^{K}a_kA_n^k}_{\Theta_n} + \underbrace{\sum_{k=0}^{K}a_kA_n^k -h(A_n)}_{\Delta_n^{(2)}}. \]

We first measure $\|\Delta_n^{(1)} + \Delta_n^{(2)}\|_2$. By Theorem \ref{lem:matrixtaylorserieserrorbd},
\begin{eqnarray}\nonumber
&&\|\Delta_n^{(1)} + \Delta_n^{(2)}\|_2\\\nonumber
&\leq&\|h(T_n)-\sum_{k=0}^{K}a_kT_n^k\|_2 + \|h(A_n)-\sum_{k=0}^{K}a_kA_n^k\|_2\\\nonumber
&\leq&\frac{\|T_n\|^{K+1} _2}{(K+1)!}\max_{0\leq t \leq 1 }\|h^{(k+1)}(tT_n) \|_2 + \frac{\|A_n\|^{K+1}_2}{(K+1)!} \max_{0\leq t \leq 1}\| h^{(k+1)}(tA_n) \|_2.
\end{eqnarray}

Before we provide a measure of $\|\Delta_n^{(1)} + \Delta_n^{(2)}\|_2,$ we notice by Lemma \ref{lem:TCBound} that
\begin{eqnarray}\nonumber
 \max_{0\leq t \leq 1}\| h^{(k+1)}(tT_n) \|_2&=& \max_{0\leq t \leq 1}\|\sum_{k=0}^{\infty}\frac{(K+k+1)!}{k!}a_{K+k+1}(tT_n)^k \|_2\\\nonumber
&\leq& \sum_{k=0}^{\infty}\frac{(K+k+1)!}{k!}|a_{K+k+1}|\|T_n\|_2^k \\\nonumber
&\leq& \sum_{k=0}^{\infty}\frac{(K+k+1)!}{k!}|a_{K+k+1}|(\frac{f_{\max}^2}{f_{\min}})^k.
\end{eqnarray}

It can be shown that the series $\sum_{k=0}^{\infty}\frac{(K+k+1)!}{k!}|a_{K+k+1}|(\frac{f_{\max}^2}{f_{\min}})^k$ is convergent. By the assumption $\frac{f_{\max}^2}{f_{\min}}<r=(\lim_{k\to\infty}|\frac{a_{k+1}}{a_{k}}|)^{-1}$, 
\[
\lim_{k\to\infty}|\frac{a_{K+k+2}}{a_{K+k+1}}| (\frac{K+k+2}{k+1})\frac{f_{\max}^2}{f_{\min}}=\lim_{k\to\infty}|\frac{a_{k+1}}{a_{k}}|\frac{f_{\max}^2}{f_{\min}}<(\frac{1}{r})r=1.
\]
By the ratio test, the series is convergent and is therefore independent of $n$. We then denote it by $\sum_{k=0}^{\infty}\frac{(K+k+1)!}{k!}|a_{K+k+1}|(\frac{f_{\max}^2}{f_{\min}})^k=:m_{(\frac{f_{\max}^2}{f_{\min}})}$.
	
%
Using a similar argument for $\max_{0\leq t \leq 1}\| h^{(k+1)}(tA_n) \|_2$, we have 
\begin{eqnarray}\nonumber
\|\Delta_n^{(1)} + \Delta_n^{(2)}\|_2
&\leq&\frac{\|T_n \|_2^{K+1}}{(K+1)!}m_{(\frac{{f_{\max}^2}}{f_{\min}})}+ \frac{\|A_n \|_2^{K+1}}{(K+1)!}m_{({f_{\max}})}\\\nonumber
&\leq&\frac{(\frac{{f_{\max}^2}}{f_{\min}})^{K+1}}{(K+1)!}m_{(\frac{{f_{\max}^2}}{f_{\min}})}+ \frac{f_{\max}^{K+1}}{(K+1)!}m_{({f_{\max}})}=:\epsilon_{K}
\end{eqnarray}
which tends to zero when $K$ is sufficiently large. Therefore, for a given $\epsilon_K>0$, there exists an integer $K$ such that
\begin{equation}
 \|\Delta_n^{(1)} + \Delta_n^{(2)}\|_2\leq\epsilon_{K}\leq \epsilon. \label{eqn:B1B2}
 \end{equation}

Next, we show that $\Theta_n$ can be further decomposed into the sum of a low rank matrix and a small norm matrix. By Theorem \ref{lem:mianTAdecompose}, for all $\epsilon>0$ there exist integers $N_1$ and $M_2>0$ such that for all $n>N_1$, we have
\[T_n  - A_n  = V_n + W_n,\]where
 $ V_{n}=\begin{bsmallmatrix}{}
 &  &  & \Diamond & \cdots &\Diamond \\
 &  &  &   & \ddots & \vdots \\
 & & & & & \Diamond  \\
\Diamond &  &  &  &  \\
\vdots & \ddots & & &   &  \\
\Diamond & \cdots& \Diamond  &&  & 
\end{bsmallmatrix}$ with rhombuses representing nonzero entries,
\[ \text{rank}(V_n) \leq 2M_1, \quad \text{and} \quad\|W_n\|_2 \leq \epsilon. \]
We then decompose $\Theta_n$ into
\begin{eqnarray}\nonumber
\Theta_n &=& \sum_{k=0}^{K}a_kT_n^k - \sum_{k=0}^{K}a_kA_n^k\\\nonumber
&=&\sum_{k=1}^{K}a_k( \sum_{j=0}^{k-1} T_n ^{j}( T_n -A_n )A_n ^{k-1-j} )\\\nonumber
&=&\sum_{k=1}^{K}a_k( \sum_{j=0}^{k-1} T_n ^{j}(V_n + W_n)A_n ^{k-1-j} )\\\nonumber
&=&\underbrace{ \sum_{k=1}^{K}a_k( \sum_{j=0}^{k-1} T_n ^{j}V_nA_n ^{k-1-j} )}_{R_n } + \underbrace{\sum_{k=1}^{K}a_k( \sum_{j=0}^{k-1} T_n ^{j}W_nA_n ^{k-1-j} )}_{\Delta_n^{(3)}}.
 \end{eqnarray}


By Lemma \ref{lem:TCBound},
\begin{eqnarray}\nonumber
\|\Delta_n^{(3)}\|_2 &=& \|\sum_{k=1}^{K}a_k \sum_{j=0}^{k-1} T_n ^{j}W_nA_n ^{k-1-j} \|_2\\\nonumber
&\leq &\|W_n\|_2\sum_{k=1}^{K}|a_k| \sum_{j=0}^{k-1}\| T_n \|_2^{j}\|A_n \|_2^{k-1-j}\\\label{eqn:Jnorm}
&\leq &\epsilon \underbrace{\sum_{k=1}^{K}|a_k| \sum_{j=0}^{k-1}\frac{f_{\max}^{k-1+j}}{f_{\min}^{j}}}_{m_0}
\end{eqnarray} where $m_0$ is a constant independent of $n$.

We now estimate the rank of $R_n$ by inspecting its sparsity structure. Using a computational lemma given in \cite{doi:10.1137/080720280} (see the proof of Lemma 3.11 wherein), simple calculations give
 $ 
  T_n ^{\alpha}V_nA_n ^{\beta}=\begin{bsmallmatrix}{}
\Diamond & \cdots & \Diamond &  & \Diamond & \cdots& \Diamond \\
\vdots  & \Diamond  & \vdots  &  &\vdots  & \Diamond& \vdots \\
\Diamond & \cdots  & \Diamond&  & \Diamond & \cdots& \Diamond \\
  &   & &    && &  \\
\Diamond & \cdots & \Diamond &  & \Diamond & \cdots& \Diamond \\
\vdots  & \Diamond  & \vdots  &  &\vdots  &\Diamond & \vdots\\
\Diamond & \cdots  & \Diamond&  & \Diamond & \cdots& \Diamond 
\end{bsmallmatrix},
$
where the rhombuses represent the nonzero entries. Assuming $n>2\max(\alpha+1, \beta+1)M_1$, these entries appear only in the four $(\alpha+1)M_1$ by $(\beta+1)M_1$ blocks located in the corners. As the rank of $R_n =\sum_{k=1}^{K}a_k( \sum_{j=0}^{k-1} T_n ^{j}V_nA_n ^{k-1-j} )$ is determined by that of $\sum_{j=0}^{K-1}T_n ^{j}V_nA_n ^{K-1-j}$, which is a matrix with only four nonzero $KM_1$ by $KM_1$ blocks in its corners, the rank of $R_n$ is bounded by $2KM_1$ provided that $n>2KM_1$.

Therefore, combining (\ref{eqn:B1B2}) and (\ref{eqn:Jnorm}), we let $N := \max{\{N_1, 2KM_1  \}} $ and conclude that for all $n>N$  

\[ h({T_n}) - h({A_n}) = R_n + \Delta_n^{(1)}+\Delta_n^{(2)}+\Delta_n^{(3)},\]
where $$\text{rank}(R_n) \leq 2\underbrace{KM_1}_{M},$$
and
$$\|\underbrace{\Delta_n^{(1)}+\Delta_n^{(2)}+\Delta_n^{(3)}}_{E_n}\|_2\leq(m_0+1)\epsilon_.$$\qed
\end{proof}

\begin{corollary}
\label{coro:mainTA_hon17}
Suppose $h(z)$ is an analytic function defined on $|z|<r$ with the radius of converge $r$. Let $f$ be a positive function in the Wiener class such that $\frac{f_{\max}^2}{f_{\min}}<r$. Let $A_n\in \mathbb{C}^{n \times n}$ be the Toeplitz matrix generated by $f$ and $T_n\in \mathbb{C}^{n \times n}$ be the superoptimal circulant preconditioner for $A_n$. If $\|h(T_n)^{-1}\|_2$ is uniformly bounded with respect to $n$, then for all $\epsilon > 0$ there exist positive integers $N$ and $M$ such that for all $n > N$ \[ |h({T_n})|^{-1}h({A_n}) = Q_n + \widetilde{R}_n + \widetilde{E}_n,\] where $Q_n$ is Hermitian and unitary, \[ \text{rank}(\widetilde{R}_n)\leq 2M,\quad \text{and} \quad \|\widetilde{E}_n\|_2 \leq \epsilon.\]
\end{corollary}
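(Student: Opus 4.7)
The plan is to reduce the corollary directly to Theorem \ref{thm:mainTAdecomp_hon17} by factoring $|h(T_n)|^{-1}$ out of the product. I would begin by writing
\[
|h(T_n)|^{-1}h(A_n) \;=\; |h(T_n)|^{-1}h(T_n) \;+\; |h(T_n)|^{-1}\bigl(h(A_n) - h(T_n)\bigr),
\]
and setting $Q_n := |h(T_n)|^{-1}h(T_n)$. The first piece will serve as the Hermitian unitary part, the second as the sum of a low-rank and a small-norm term.

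To show that $Q_n$ is Hermitian and unitary, I would use that $T_n$ is a Hermitian circulant matrix (since $A_n$ is Hermitian positive definite, and the superoptimal circulant preconditioner of a Hermitian matrix is Hermitian), so $T_n = U_n^*\Lambda_n U_n$ with $\Lambda_n$ real diagonal. Then $h(T_n) = U_n^* h(\Lambda_n) U_n$ and, by Definition \ref{def:abs_C}, $|h(T_n)| = U_n^*|h(\Lambda_n)|U_n$, giving
\[
Q_n \;=\; U_n^*\bigl(|h(\Lambda_n)|^{-1}h(\Lambda_n)\bigr)U_n.
\]
The diagonal entries of the middle factor are $h(\lambda_j)/|h(\lambda_j)|$; these are well-defined because the hypothesis $\|h(T_n)^{-1}\|_2 \le \text{const}$ forces $h(\lambda_j)\neq 0$, and they take values in $\{-1,+1\}$ since $h$ has real Taylor coefficients and $\lambda_j\in\mathbb{R}$. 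Hence $Q_n$ is unitarily similar to a real $\pm 1$ diagonal matrix and is therefore Hermitian and unitary with spectrum $\{\pm 1\}$.

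For the remaining two pieces, I would apply Theorem \ref{thm:mainTAdecomp_hon17} to obtain $h(T_n) - h(A_n) = R_n + E_n$ with $\mathrm{rank}(R_n)\le 2M$ and $\|E_n\|_2\le\epsilon'$ (with $\epsilon'$ to be chosen), and then define $\widetilde{R}_n := -|h(T_n)|^{-1}R_n$ and $\widetilde{E}_n := -|h(T_n)|^{-1}E_n$. Left multiplication by a nonsingular matrix does not increase rank, so $\mathrm{rank}(\widetilde{R}_n)\le 2M$. For the norm, note that
\[
\bigl\||h(T_n)|^{-1}\bigr\|_2 \;=\; \frac{1}{\min_j |h(\lambda_j)|} \;=\; \|h(T_n)^{-1}\|_2,
\]
which is uniformly bounded in $n$ by hypothesis, say by $C$. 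Then $\|\widetilde{E}_n\|_2 \le C\epsilon'$, and choosing $\epsilon' := \epsilon/C$ at the outset gives $\|\widetilde{E}_n\|_2\le\epsilon$ and completes the decomposition.

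The only delicate point is the verification that $Q_n$ is \emph{Hermitian} (and not merely unitary). This hinges on the reality of the spectrum of $T_n$, which follows from its Hermiticity, together with $h$ mapping $\mathbb{R}$ to $\mathbb{R}$, which is implicit in the Hermitian preconditioning framework of the paper. Once this is granted, together with the invertibility of $h(T_n)$ so that the sign function $h(\lambda_j)/|h(\lambda_j)|$ is defined, the rest of the argument is straightforward propagation of the bounds of Theorem \ref{thm:mainTAdecomp_hon17} through the uniformly bounded factor $|h(T_n)|^{-1}$.
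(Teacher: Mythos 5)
Your proposal is correct and follows essentially the same route as the paper: the paper factors $|h(T_n)| = h(T_n)Q_n$ with $Q_n = U_n^*\overline{h(\Lambda_n)}U_n$ the Hermitian unitary sign matrix, applies Theorem \ref{thm:mainTAdecomp_hon17}, and pushes the low-rank and small-norm terms through $Q_n h(T_n)^{-1}$, which is exactly your $|h(T_n)|^{-1}$, so your $Q_n$, $\widetilde{R}_n$, $\widetilde{E}_n$ coincide with the paper's. The only cosmetic differences are that you rescale $\epsilon' = \epsilon/C$ at the outset (the paper is content with a bound $c_0\epsilon$) and you make explicit the implicit assumptions (real spectrum of $T_n$, $h$ real on the reals, invertibility of $h(T_n)$) that the paper uses without comment.
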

\begin{proof}

We rewrite $|h{(T_n)}|$ as
\begin{eqnarray}\nonumber
|h{(T_n)}| &=& U_n^* |h({\Lambda_n})|U_n \\\nonumber
&=&  U_n^* h({\Lambda_n})U_n \underbrace{U_n^* \overline{h({\Lambda_n})}U_n}_{Q_n}\\\label{eqn:abdC}
&=& h{(T_n)} Q_n,
\end{eqnarray}
where $\overline{h({\Lambda_n})}$ is the diagonal matrix containing the sign of the eigenvalues of $h(T_n)$. As $h(T_n)$ being Hermitian has only real-valued eigenvalues, $\overline{h({\Lambda_n})}$ is in fact is a diagonal matrix with $\pm 1$ in its diagonal. Therefore, $Q_n$ is both Hermitian and unitary. 

By Theorem \ref{thm:mainTAdecomp_hon17}, for all $\epsilon > 0$ there exist positive integers $N$ and $M$ such that for all $n > N$ \[ h(T_n) - h(A_n) = R_n  + E_n ,\] where \[ \text{rank}(R_n) \leq 2M  \quad \text{and} \quad  \|E_n \|_2 \leq \epsilon.\]

By the uniform boundedness assumption that $\|h(T_n)^{-1}\|_2 < c_0 $ for $n=1,2, \dots$, where $c_0$ is a positive constant independent of $n$, 
\begin{eqnarray}\nonumber
h(T_n)^{-1}h(A_n) &=& I_n + h(T_n)^{-1}(h(A_n)-h(T_n)) \\\nonumber
&=& I_n + h(T_n)^{-1}(-R_n )+{h(T_n)^{-1}(-E_n )}.
\end{eqnarray}

By (\ref{eqn:abdC}), we then obtain
\begin{eqnarray}\nonumber
|h(T_n)|^{-1}h(A_n) &=& Q_n h(T_n)^{-1}h(A_n)\\\nonumber
& =& Q_n + \underbrace{Q_nh(T_n)^{-1}(-R_n ) }_{\widetilde{{R}}_n } + \underbrace{Q_nh(T_n)^{-1}(-E_n ) }_{\widetilde{{E}}_n },
\end{eqnarray}
where
\[
\text{rank}(\widetilde{{R}}_n )= \text{rank}(Q_nh(T_n)^{-1}R_n) = \text{rank}({{R}}_n ) \leq 2M
\] and 
\[
\|\widetilde{{E}}_n \|_2 = \|Q_nh(T_n)^{-1}E_n \|_2 =  \|h(T_n)^{-1}E_n \|_2 \leq c_0\epsilon.
\]
\qed
\end{proof}

\begin{remark}
Note that the bound of $\|h(T_n)^{-1}\|_2$ depends on $h(z)$ as well as the generating function $f$. For example, considering $h(z)=\cos{z}$, we can have a case in which 
\[
\|(\cos{T_n})^{-1}\|_2=\max_j{|\frac{1}{\cos{\lambda_j}} |},
\]
where $\lambda_j$ is the $j$-th eigenvalue of $T_n$. Namely, $\|(\cos{T_n})^{-1}\|_2$ goes to infinity as $\cos{\lambda_j}$ approaches zero. Therefore, the uniform boundedness condition on $\|h(T_n)^{-1}\|_2$ is required.
\end{remark}

We can now show that the eigenvalues of $|h({T_n})|^{-1}h({A_n})$ are clustered around $\pm 1$ using Corollary \ref{coro:mainTA_hon17}. Note however that both $\widetilde{R}_n$ and $\widetilde{E}_n$ in the corollary are not Hermitian in general. Besides, one must deal with the unitary matrix $Q_n$ instead of the usual identity matrix in the matrix decomposition. Therefore, Cauchy's interlace theorem that was used for example in \cite{MR2376196} to show clustered spectra does not straightforwardly apply. Nevertheless, we are still able to show the clustered spectra of our concerned preconditioned matrix via a simple trick.

\begin{corollary}
\label{coro:mainTA_clusters_hon17}
Suppose $h(z)$ is an analytic function defined on $|z|<r$ with the radius of converge $r$. Let $f$ be a positive function in the Wiener class such that $\frac{f_{\max}^2}{f_{\min}}<r$. Let $A_n\in \mathbb{C}^{n \times n}$ be the Toeplitz matrix generated by $f$ and $T_n\in \mathbb{C}^{n \times n}$ be the superoptimal circulant preconditioner for $A_n$. If $\|h(T_n)^{-1}\|_2$ is uniformly bounded with respect to $n$, then $|h({T_n})|^{-1}h({A_n})$ has clustered spectra around $\pm 1$ for sufficiently large $n$.
\end{corollary}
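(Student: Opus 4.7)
My plan is to leverage the decomposition from Corollary \ref{coro:mainTA_hon17} by passing to a Hermitian matrix similar to the preconditioned matrix, so that the standard Weyl and Cauchy interlacing perturbation machinery becomes available. Set $B_n := |h(T_n)|^{-1} h(A_n)$ and
\[
H_n := |h(T_n)|^{-1/2} h(A_n) |h(T_n)|^{-1/2}.
\]
Since $H_n = |h(T_n)|^{1/2} B_n |h(T_n)|^{-1/2}$, the matrices $B_n$ and $H_n$ are similar and therefore share the same spectrum. Moreover, because $h(A_n)$ is Hermitian (as in the proof of Corollary \ref{coro:mainTA_hon17}) and $|h(T_n)|^{-1/2}$ is Hermitian positive definite, $H_n$ is Hermitian and its eigenvalues are real. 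Hence it suffices to establish the claimed clustering for $H_n$.

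Next I would transfer the decomposition $B_n = Q_n + \widetilde{R}_n + \widetilde{E}_n$ of Corollary \ref{coro:mainTA_hon17} across this similarity. The key algebraic observation is that $|h(T_n)|^{1/2}$ and $Q_n$ are both circulant, hence simultaneously diagonalised by $U_n$, so they commute and $|h(T_n)|^{1/2} Q_n |h(T_n)|^{-1/2} = Q_n$. Writing
\[
R'_n := |h(T_n)|^{1/2} \widetilde{R}_n |h(T_n)|^{-1/2}, \qquad E'_n := |h(T_n)|^{1/2} \widetilde{E}_n |h(T_n)|^{-1/2},
\]
I obtain $H_n = Q_n + R'_n + E'_n$ with $\operatorname{rank}(R'_n) \leq 2M$ (similarity preserves rank) and $\|E'_n\|_2 \leq \|h(T_n)\|_2^{1/2}\,\|h(T_n)^{-1}\|_2^{1/2}\,\epsilon$. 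The factor $\|h(T_n)\|_2$ is uniformly bounded by Lemma \ref{lem:TCBound} and the analyticity of $h$ on $|z|<r$ (together with $\|T_n\|_2 \leq f_{\max}^2/f_{\min} < r$), while $\|h(T_n)^{-1}\|_2$ is uniformly bounded by the standing hypothesis of the corollary. Therefore $\|E'_n\|_2 = O(\epsilon)$ uniformly in $n$.

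I would then Hermitianise. Since $H_n - Q_n$ is Hermitian as a difference of Hermitian matrices, it equals its own Hermitian part, so
\[
H_n - Q_n = \tfrac{1}{2}\bigl(R'_n + (R'_n)^*\bigr) + \tfrac{1}{2}\bigl(E'_n + (E'_n)^*\bigr),
\]
where the first summand is Hermitian of rank at most $4M$ and the second is Hermitian of operator norm at most $\|E'_n\|_2 = O(\epsilon)$. Using Weyl's inequality to absorb the small-norm Hermitian summand, and Cauchy's interlacing theorem (equivalently Weyl's inequality for Hermitian rank-$k$ perturbations) to absorb the rank summand, and recalling that every eigenvalue of $Q_n$ lies in $\{+1,-1\}$, I conclude that all but at most $8M$ eigenvalues of $H_n$ lie within an $O(\epsilon)$-neighbourhood of $\{\pm 1\}$. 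Because $B_n$ and $H_n$ are similar, the same holds for $|h(T_n)|^{-1} h(A_n)$, which is precisely Definition \ref{def:clsutered_spec}.

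The main obstacle is the bookkeeping around the similarity: one must verify that the similarity fixes $Q_n$ (which forces use of the circulant structure, i.e.\ the commutation of $|h(T_n)|^{1/2}$ with $Q_n$), and that the subsequent Hermitianisation at most doubles the rank and does not inflate the norm. Both are controlled uniformly in $n$ exactly because of the hypothesis $\|h(T_n)^{-1}\|_2 \leq c_0$ combined with the uniform bound on $\|h(T_n)\|_2$ inherited from $\|T_n\|_2 \leq f_{\max}^2/f_{\min} < r$.
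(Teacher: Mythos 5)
Your proof is correct, but its second half takes a genuinely different route from the paper. Both arguments start the same way: pass to the Hermitian matrix $H_n=|h(T_n)|^{-1/2}h(A_n)|h(T_n)|^{-1/2}$, which is similar to $|h(T_n)|^{-1}h(A_n)$, and conjugate the decomposition of Corollary \ref{coro:mainTA_hon17}. The paper then deals with the fact that the conjugated terms $\overline{R}_n,\overline{E}_n$ are not Hermitian by a doubling trick: it embeds $H_n$ into the $2n\times 2n$ Hermitian matrix $\begin{bsmallmatrix} & H_n\\ H_n^* & \end{bsmallmatrix}$, applies Corollary 3 of \cite{BRANDTS20103100} to that block matrix, reads off clustering of the singular values of $H_n$ around $1$, and finally uses that $H_n$ is Hermitian to convert singular values back into eigenvalues near $\pm 1$. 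You instead observe that $Q_n$ and $|h(T_n)|^{\pm 1/2}$ are circulant, hence commute, so the similarity fixes $Q_n$; since $H_n-Q_n$ is then Hermitian, you may replace the non-Hermitian pair $R'_n,E'_n$ by their Hermitian parts and apply ordinary Weyl plus rank-$k$ interlacing, which is exactly the obstruction ("the perturbations are not Hermitian") that the paper circumvents with its block trick. Your route is more elementary and self-contained (no doubled matrix, no external eigenvalue-bound result), and it makes the uniformity bookkeeping explicit: the norm inflation under conjugation is controlled by the uniform bounds on $\|h(T_n)\|_2$ (from $\|T_n\|_2\le f_{\max}^2/f_{\min}<r$) and on $\|h(T_n)^{-1}\|_2$, a point the paper passes over when asserting $\|\overline{E}_n\|_2\le\epsilon$. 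The paper's trick is in turn more robust, since it only needs $\overline{Q}_n$ to be unitary rather than Hermitian and equal to $Q_n$, so it would also work where the commutation argument is unavailable. One cosmetic caveat: your interlacing count of at most $8M$ outliers may need a slightly larger constant once edge indices are included, but as Definition \ref{def:clsutered_spec} only requires some constant independent of $n$, this does not affect the conclusion.
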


\begin{proof}
By Corollary \ref{coro:mainTA_hon17}, for all $\epsilon > 0$ there exist positive integers $N$ and $M$ such that for all $n > N$ 
\begin{eqnarray}\nonumber
&&\underbrace{|h({T_n})|^{-\frac{1}{2}}h({A_n})|h({T_n})|^{-\frac{1}{2}}}_{H_n} \\\nonumber
&=& \underbrace{|h({T_n})|^{\frac{1}{2}}Q_n|h({T_n})|^{-\frac{1}{2}}}_{\overline{Q}_n} + \underbrace{|h({T_n})|^{\frac{1}{2}}\widetilde{R}_n|h({T_n})|^{-\frac{1}{2}}}_{\overline{R}_n} + \underbrace{|h({T_n})|^{\frac{1}{2}}\widetilde{E}_n|h({T_n})|^{-\frac{1}{2}}}_{\overline{E}_n},
\end{eqnarray}
where $\overline{Q}_n$ is unitary and is similar to $Q_n$, \[ \text{rank}(\overline{R}_n)\leq 2M, \quad \text{and} \quad\|\overline{E}_n\|_2 \leq \epsilon,\] provided that $\|h(T_n)^{-1}\|_2$ is uniformly bounded with respect to $n$.

We introduce the following matrix decomposition

\[
\underbrace{\begin{bmatrix}{}
 & H_n \\
H_n^* & \\
\end{bmatrix}}_{\mathcal{H}}=\underbrace{\begin{bmatrix}{}
 & \overline{Q}_n \\
\overline{Q}_n^* & \\
\end{bmatrix}}_{\mathcal{Q}}+\underbrace{\begin{bmatrix}{}
 & \overline{R}_n \\
\overline{R}_n^* & \\
\end{bmatrix}}_{\mathcal{R}}+\underbrace{\begin{bmatrix}{}
 & \overline{E}_n \\
\overline{E}_n^* & \\
\end{bmatrix}}_{\mathcal{E}},
\]
where $\mathcal{Q}$ is unitary,
\[
\text{rank}(\mathcal{R})\leq 4M, \quad \text{and} \quad \text{rank}(\mathcal{E})\leq 2\epsilon.
\]

Note that all $\mathcal{H}$, $\mathcal{Q}$, $\mathcal{R}$, and $\mathcal{E}$ are Hermitian. By Corollary $3$ in \cite{BRANDTS20103100}, we know that there are at most $2(4M)=8M$ eigenvalues of $\mathcal{H}$ that are not around $\pm 1$. Thus, $\mathcal{H}$ has clustered spectra around $\pm 1$ by Definition \ref{def:clsutered_spec}. As the eigenvalues of $\mathcal{H}$ are the same as the singular values of $H_n$ up to $\pm$ sign, the singular values of $H_n$ are clustered around $1$. Consequently, as $H_n$ is Hermitian and is similar to $|h(T_n)|^{-1}h(A_n)$, we conclude that $|h(T_n)|^{-1}h(A_n)$ has clustered spectra around $\pm 1$.
\qed
\end{proof}


Using Lemma  \ref{lem:TCBound} and Theorem \ref{lem:mianSAdecompose}, we can show similar results for Strang's circulant preconditioners.

\begin{theorem}
\label{thm:mainSAdecomp_hon17}
Suppose $h(z)$ is an analytic function defined on $|z|<r$ with the radius of converge $r$. Let $f$ be a positive function in the Wiener class such that $f_{\max}<r$. Let $A_n \in \mathbb{C}^{n \times n}$ be the Toeplitz matrix generated by $f$ and $S_n \in \mathbb{C}^{n \times n}$ be Strang's circulant preconditioner for $A_n$. Then, for all $\epsilon > 0$ there exist positive integers $N$ and $M$ such that for all $n > N$ \[ h({S_n}) - h({A_n}) = R_n + E_n,\] where \[ \text{rank}(R_n) \leq 2M \quad \text{and} \quad \|E_n \|_2 \leq \epsilon.\]
\end{theorem}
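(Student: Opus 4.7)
The plan is to mirror the argument of Theorem \ref{thm:mainTAdecomp_hon17} almost verbatim, replacing $T_n$ by $S_n$ throughout and using the sharper bound $\|S_n\|_2 \leq f_{\max}$ from Lemma \ref{lem:TCBound} instead of $\|T_n\|_2 \leq f_{\max}^2/f_{\min}$. The hypothesis has therefore been relaxed to $f_{\max}<r$, which is exactly what makes the Taylor series arguments go through.

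First, I would observe that since $f_{\max}<r$ and $\|S_n\|_2,\|A_n\|_2\leq f_{\max}$, all eigenvalues of both $S_n$ and $A_n$ lie strictly inside the disk of convergence, so by Theorem \ref{lem:matrixtaylorseries} both $h(S_n)=\sum_{k=0}^{\infty}a_k S_n^k$ and $h(A_n)=\sum_{k=0}^{\infty}a_k A_n^k$ are well defined. I then split
\[
h(S_n)-h(A_n) = \underbrace{h(S_n)-\sum_{k=0}^{K}a_k S_n^k}_{\Delta_n^{(1)}} + \underbrace{\sum_{k=0}^{K}a_k(S_n^k-A_n^k)}_{\Theta_n} + \underbrace{\sum_{k=0}^{K}a_k A_n^k - h(A_n)}_{\Delta_n^{(2)}}.
\]
Applying Theorem \ref{lem:matrixtaylorserieserrorbd} to each tail and bounding $\max_{0\le t\le 1}\|h^{(K+1)}(tS_n)\|_2$ and $\max_{0\le t\le 1}\|h^{(K+1)}(tA_n)\|_2$ by the series $\sum_{k=0}^{\infty}\tfrac{(K+k+1)!}{k!}|a_{K+k+1}|f_{\max}^k=:m_{(f_{\max})}$, which converges by the ratio test since $\lim_k|a_{k+1}/a_k|f_{\max}=f_{\max}/r<1$, yields
\[
\|\Delta_n^{(1)}+\Delta_n^{(2)}\|_2 \leq \frac{2 f_{\max}^{K+1}}{(K+1)!}\,m_{(f_{\max})},
\]
which can be made arbitrarily small by choosing $K$ large, independently of $n$.

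Next, I telescope $S_n^k-A_n^k = \sum_{j=0}^{k-1} S_n^j(S_n-A_n)A_n^{k-1-j}$, and invoke Theorem \ref{lem:mianSAdecompose} to write $S_n-A_n=V_n+W_n$ with $\mathrm{rank}(V_n)\leq 2M_1$ and $\|W_n\|_2\leq \epsilon$ for all $n>N_1$. This splits $\Theta_n=R_n+\Delta_n^{(3)}$, where
\[
R_n=\sum_{k=1}^K a_k\sum_{j=0}^{k-1}S_n^j V_n A_n^{k-1-j},\qquad \Delta_n^{(3)}=\sum_{k=1}^K a_k\sum_{j=0}^{k-1}S_n^j W_n A_n^{k-1-j}.
\]
Using $\|S_n\|_2,\|A_n\|_2\leq f_{\max}$ gives $\|\Delta_n^{(3)}\|_2 \leq \epsilon\sum_{k=1}^K |a_k|\,k\,f_{\max}^{k-1}$, a constant multiple of $\epsilon$ independent of $n$.

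Finally, I estimate the rank of $R_n$ via its sparsity structure exactly as in the proof of Theorem \ref{thm:mainTAdecomp_hon17}: each product $S_n^{\alpha}V_n A_n^{\beta}$ has its nonzero entries confined to four corner blocks of size $(\alpha+1)M_1\times(\beta+1)M_1$ (using the computational lemma of \cite{doi:10.1137/080720280}), so once $n>2KM_1$ the matrix $R_n$ has rank at most $2KM_1$. Setting $M:=KM_1$, $N:=\max\{N_1,2KM_1\}$, and $E_n:=\Delta_n^{(1)}+\Delta_n^{(2)}+\Delta_n^{(3)}$ delivers the claimed decomposition. The only step with any subtlety is the ratio-test verification of convergence of $m_{(f_{\max})}$ together with the interchange of summation and norm in bounding $\|h^{(K+1)}(tS_n)\|_2$; this is routine once the spectral-radius hypothesis $f_{\max}<r$ is used, and is the direct analogue of the corresponding step for $T_n$.
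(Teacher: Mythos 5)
Your proposal is correct and is precisely the argument the paper intends: the paper omits the proof, stating only that the result follows by the same reasoning as Theorem \ref{thm:mainTAdecomp_hon17} using Lemma \ref{lem:TCBound} and Theorem \ref{lem:mianSAdecompose}, which is exactly your substitution of $S_n$ for $T_n$ with the bound $\|S_n\|_2\leq f_{\max}$ in place of $f_{\max}^2/f_{\min}$.
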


\begin{corollary}
\label{coro:mainST_hon17}
Suppose $h(z)$ is an analytic function defined on $|z|<r$ with the radius of converge $r$. Let $f$ be a positive function in the Wiener class such that $f_{\max}<r$. Let $A_n \in \mathbb{C}^{n \times n}$ be the Toeplitz matrix generated by $f$ and $S_n \in \mathbb{C}^{n \times n}$ be Strang's circulant preconditioner for $A_n$. If $\|h(S_n)^{-1}\|_2$ is uniformly bounded with respect to $n$, then for all $\epsilon > 0$ there exist positive integers $N$ and $M$ such that for all $n > N$ \[ |h({S_n})|^{-1}h({A_n}) = Q_n + \widetilde{R}_n + \widetilde{E}_n,\] where $Q_n$ is Hermitian and unitary, \[ \text{rank}(\widetilde{R}_n)\leq 2M, \quad \text{and} \quad \|\widetilde{E}_n\|_2 \leq \epsilon.\]
\end{corollary}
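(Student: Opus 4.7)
The plan is to follow the template established in the proof of Corollary \ref{coro:mainTA_hon17} almost verbatim, since Strang's preconditioner $S_n$ enjoys the same circulant structural properties as the superoptimal preconditioner $T_n$, and Theorem \ref{thm:mainSAdecomp_hon17} gives the exact analogue of Theorem \ref{thm:mainTAdecomp_hon17}. The hypothesis $f_{\max}<r$ now plays the role that $\frac{f_{\max}^2}{f_{\min}}<r$ played previously, and this is actually a weaker requirement because by Lemma \ref{lem:TCBound} we have $\|S_n\|_2 \leq f_{\max}$, so the eigenvalues of $S_n$ still lie strictly inside the disk of convergence and $h(S_n)=\sum_{k=0}^{\infty}a_k S_n^k$ is well-defined via Theorem \ref{lem:matrixtaylorseries}.

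First I would write $|h(S_n)|=U_n^*|h(\Lambda_n)|U_n$ using the circulant diagonalisation, and factor it as
\[
|h(S_n)|=h(S_n)\,Q_n,\qquad Q_n := U_n^*\,\overline{\mathrm{sgn}(h(\Lambda_n))}\,U_n,
\]
where the diagonal entries of $\overline{\mathrm{sgn}(h(\Lambda_n))}$ are the $\pm 1$ signs of the (real) eigenvalues of the Hermitian matrix $h(S_n)$. Exactly as in the derivation of (\ref{eqn:abdC}), $Q_n$ is both Hermitian and unitary.

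Next, by Theorem \ref{thm:mainSAdecomp_hon17}, for every $\epsilon>0$ there exist $N,M$ such that for all $n>N$,
\[
h(S_n)-h(A_n)=R_n+E_n,\qquad \mathrm{rank}(R_n)\leq 2M,\quad \|E_n\|_2\leq \epsilon.
\]
Using the uniform bound $\|h(S_n)^{-1}\|_2\leq c_0$, I would then write
\[
h(S_n)^{-1}h(A_n) = I_n + h(S_n)^{-1}(h(A_n)-h(S_n)) = I_n - h(S_n)^{-1}R_n - h(S_n)^{-1}E_n,
\]
and premultiply by $Q_n$ to obtain
\[
|h(S_n)|^{-1}h(A_n) = Q_n\, h(S_n)^{-1}h(A_n) = Q_n + \underbrace{\bigl(-Q_n h(S_n)^{-1}R_n\bigr)}_{\widetilde{R}_n} + \underbrace{\bigl(-Q_n h(S_n)^{-1}E_n\bigr)}_{\widetilde{E}_n}.
\]
Since $Q_n$ is unitary, $\mathrm{rank}(\widetilde{R}_n)=\mathrm{rank}(R_n)\leq 2M$, and $\|\widetilde{E}_n\|_2 \leq \|h(S_n)^{-1}\|_2\|E_n\|_2 \leq c_0\epsilon$, completing the decomposition.

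There is essentially no new obstacle here: the only thing worth double-checking is that the hypotheses of the preceding results are satisfied under the condition $f_{\max}<r$ (rather than $\frac{f_{\max}^2}{f_{\min}}<r$), which is automatic since $f_{\min}\leq f_{\max}$ would give $\frac{f_{\max}^2}{f_{\min}}\geq f_{\max}$ but Strang's preconditioner satisfies the tighter bound $\|S_n\|_2\leq f_{\max}$, so the convergence of the Taylor series and the derivative estimates in the proof of Theorem \ref{thm:mainSAdecomp_hon17} go through with $f_{\max}$ in place of $\frac{f_{\max}^2}{f_{\min}}$. After that, the argument is line-for-line identical to that of Corollary \ref{coro:mainTA_hon17}, so I would simply invoke Theorem \ref{thm:mainSAdecomp_hon17} and mimic the computation above.
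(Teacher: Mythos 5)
Your proposal is correct and follows exactly the route the paper intends: the paper omits the proof of this corollary, stating that it follows by the same argument as Corollary \ref{coro:mainTA_hon17} with Lemma \ref{lem:TCBound} and Theorem \ref{lem:mianSAdecompose} (via Theorem \ref{thm:mainSAdecomp_hon17}), which is precisely what you carry out, including the correct observation that the bound $\|S_n\|_2\leq f_{\max}$ is what makes the weaker hypothesis $f_{\max}<r$ sufficient.
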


\begin{corollary}
\label{coro:mainST_clusters_hon17}
Suppose $h(z)$ is an analytic function defined on $|z|<r$ with the radius of converge $r$. Let $f$ be a positive function in the Wiener class such that $f_{\max}<r$. Let $A_n\in \mathbb{C}^{n \times n}$ be the Toeplitz matrix generated by $f$ and $S_n\in \mathbb{C}^{n \times n}$ be Strang's circulant preconditioner for $A_n$. If $\|h(S_n)^{-1}\|_2$ is uniformly bounded with respect to $n$, then $|h({S_n})|^{-1}h({A_n})$ has clustered spectra around $\pm 1$ for sufficiently large $n$.
\end{corollary}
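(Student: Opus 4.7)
The plan is to mirror the proof of Corollary \ref{coro:mainTA_clusters_hon17} verbatim, since the only place the superoptimal preconditioner entered that argument was through the invocation of Corollary \ref{coro:mainTA_hon17}. Here we replace it by Corollary \ref{coro:mainST_hon17}, whose hypotheses coincide precisely with those of the present statement.

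First I would apply Corollary \ref{coro:mainST_hon17} to obtain, for every $\epsilon>0$, positive integers $N$ and $M$ such that for all $n>N$
\[
|h(S_n)|^{-1}h(A_n) = Q_n + \widetilde{R}_n + \widetilde{E}_n,
\]
with $Q_n$ Hermitian and unitary, $\text{rank}(\widetilde{R}_n)\leq 2M$, and $\|\widetilde{E}_n\|_2\leq \epsilon$. Since $\|h(S_n)^{-1}\|_2$ is uniformly bounded and $|h(S_n)|$ is Hermitian positive definite, I would then conjugate by $|h(S_n)|^{1/2}$ to form the Hermitian matrix
\[
H_n := |h(S_n)|^{-\frac{1}{2}} h(A_n) |h(S_n)|^{-\frac{1}{2}},
\]
which is similar to $|h(S_n)|^{-1}h(A_n)$, and rewrite it as $H_n = \overline{Q}_n + \overline{R}_n + \overline{E}_n$, where $\overline{Q}_n$ is unitary (similar to $Q_n$), $\text{rank}(\overline{R}_n)\leq 2M$, and $\|\overline{E}_n\|_2$ stays of order $\epsilon$ thanks to the uniform boundedness of $\||h(S_n)|^{\pm 1/2}\|_2$.

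Next I would invoke the $2\times 2$ block trick introduced in the proof of Corollary \ref{coro:mainTA_clusters_hon17}: embed $H_n,\overline{Q}_n,\overline{R}_n,\overline{E}_n$ as the anti-diagonal blocks of the Hermitian matrices $\mathcal{H},\mathcal{Q},\mathcal{R},\mathcal{E}$. Then $\mathcal{Q}$ is unitary, $\text{rank}(\mathcal{R})\leq 4M$ and $\|\mathcal{E}\|_2\leq \epsilon$. By Corollary 3 of \cite{BRANDTS20103100}, at most $8M$ eigenvalues of $\mathcal{H}$ lie outside any neighbourhood of $\pm 1$, so the eigenvalues of $\mathcal{H}$ cluster at $\pm 1$ in the sense of Definition \ref{def:clsutered_spec}. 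Since the spectrum of $\mathcal{H}$ is $\pm$ the singular values of $H_n$, those singular values cluster at $1$; but $H_n$ is Hermitian and similar to $|h(S_n)|^{-1}h(A_n)$, so the eigenvalues of the preconditioned matrix cluster at $\pm 1$.

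The main obstacle is essentially bookkeeping rather than a genuine difficulty: one must verify that the norm factor $\||h(S_n)|^{1/2}\|_2\,\||h(S_n)|^{-1/2}\|_2$ multiplying $\epsilon$ in the definition of $\overline{E}_n$ remains bounded uniformly in $n$. This follows from the uniform bound $\|S_n\|_2\leq f_{\max}$ of Lemma \ref{lem:TCBound} together with the convergence of the Taylor series for $h$ on $|z|<r$, which yields a uniform bound on $\|h(S_n)\|_2$ (hence on $\||h(S_n)|^{1/2}\|_2$), while the uniform bound on $\|h(S_n)^{-1}\|_2 = \||h(S_n)|^{-1}\|_2$ is given by hypothesis. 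Once this is dispatched, the remainder of the argument is a line-by-line transcription of the superoptimal case. \qed
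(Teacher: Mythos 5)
Your proposal is correct and is essentially the paper's own argument: the paper states this corollary without proof, noting only that it follows from Lemma \ref{lem:TCBound} and Theorem \ref{lem:mianSAdecompose} by the same reasoning as in the superoptimal case, and your line-by-line transcription of Corollary \ref{coro:mainTA_clusters_hon17} with Corollary \ref{coro:mainST_hon17} in place of Corollary \ref{coro:mainTA_hon17} is exactly that reasoning. Your extra bookkeeping on $\||h(S_n)|^{\pm 1/2}\|_2$ (via $\|S_n\|_2\leq f_{\max}<r$ and the hypothesis on $\|h(S_n)^{-1}\|_2$) is sound and, if anything, slightly more explicit than the paper's treatment of the corresponding step.
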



Note that $h(A_n)$ is Hermitian when $A_n$ is Hermitian. Similarly, $h(T_n)$ (of $h(S_n)$) is Hermitian when $T_n$ (or $S_n$) is Hermitian. Hence, we consider the following cases: (i) when $h(A_n)$ is Hermitian indefinite, MINRES can be used with $|h(T_n)|$ as a preconditioner. (ii) In the special case in which $h(A_n)$ is Hermitian positive definite, CG with $|h(T_n)|$ can then be employed.

\section{Extension to block Toeplitz matrices with Toeplitz blocks}

Our results on Toeplitz matrices given in \cite{Hon2018148} can be extended to block Toeplitz matrices with Toeplitz blocks (BTTB). In this section, we provide several theorems that show the effectiveness of the optimal block circulant preconditioners with circulant blocks (BCCB) for functions of BTTB matrices.


A BTTB matrix $A_{(n,m)} \in \mathbb{C}^{nm \times nm}$ is given by
\[ A_{(n,m)}=\begin{bmatrix}
A_{(0)} & A_{(-1)} & \cdots & A_{(-(n-1))} \\
A_{(1)} & \ddots & \ddots & \vdots \\
\vdots & \ddots & \ddots  & A_{(-1)} \\
A_{(n-1)} & \cdots & A_{(1)} & A_{(0) }
\end{bmatrix}  \] where the blocks $A_{(k)} \in \mathbb{C}^{m \times m}$, $|k|\leq n-1$, are Toeplitz matrices. We denote the entries of $A_{(n,m)}$ by $[A_{(n,m)}]_{p,q:r,s}=a_{p-q}^{(r-s)}$ for $1\leq r,s \leq n$ and $1\leq p,q \leq m$. Like Toeplitz matrices, we assume that $A_{mn}$ is associated with a generating function $f(x,y)$ defined on $[-\pi,\pi]\times[-\pi,\pi]$ and its Fourier coefficients are given by
\[ a_{k}^{(j)}=\frac{1}{(2\pi)^2} \int_{-\pi} ^{\pi} \int_{-\pi} ^{\pi}f(x,y) e^{-\mathbf{i}(jx+ky)} \,dxdy,\quad j,k=0,\pm1,\pm2,\dots.  \]

Throughout, we assume that $f$ is in the Wiener class, i.e. $$\sum_{j=-\infty}^{\infty}\sum_{k=-\infty}^{\infty}|a_k^{(j)}|<\infty.$$ The corresponding matrix $A_{mn}$ is therefore Hermitian for all $n$ and $m$. Again, we refer to \cite{MR2376196, MR2108963} for more about BTTB matrices.

We then introduce the absolute value BCCB matrices. Note that BCCB matrices are diagonalisable by the 2-dimensional Fourier matrix $ U_{n}\otimes U_{m}$.

\begin{definition}\label{def:BCCBmatrix}
Let ${C}_{(n,m)} \in \mathbb{C}^{n m\times n m}$ be a block circulant matrix with circulant blocks (BCCB). The \emph{absolute value BCCB matrix} $|{C}_{(n,m)}| \in \mathbb{C}^{n m \times n m} $ for ${C}_{(n,m)}$ is defined by
\begin{eqnarray}\nonumber
|{C}_{(n,m)}|&=&({C}^*_{(n,m)}{C}_{(n,m)})^{1/2}\\\nonumber
&=&({C}_{(n,m)}{C}^*_{(n,m)})^{1/2}\\\nonumber
&=&(U_{n}\otimes U_{m})^*
|\Omega_{(n,m)}|
 (U_{n}\otimes U_{m}),
\end{eqnarray}
where $U_n \in \mathbb{C}^{n \times n}$ is the Fourier matrix and $|\Omega_{(n,m)}| \in  \mathbb{R}^{n m \times n m}$ is the diagonal matrix in the eigendecomposition of ${C}_{(n,m)}$ with all entries replaced by their magnitude.
\end{definition}

\begin{remark}
Note that $|{C}_{(n,m)}|$ is Hermitian positive definite by definition provided that ${C}_{(n,m)}$ is nonsingular.
\end{remark}

Given an analytic function $h$, $|h({C}_{(n,m)})|=U_n^*|h(\Omega_{(n,m)})|U_n$ is well-defined and is a BCCB matrix by Definition \ref{def:BCCBmatrix}. Therefore, for any vector $\mathbf{d}$ the product $|h({C}_{(n,m)})|^{-1}\mathbf{d}$ can be efficiently computed by 2-dimensional FFTs in $\mathcal{O}(nm\log{nm})$ operations (see Section 5.2.2 in \cite{MR2376196}).

The following results can be shown using the similar arguments given in the previous section and we therefore omit their proofs.

\begin{theorem}
\label{thm:mainTAdecomp_hon17_BTTB}
Suppose $h(z)$ is an analytic function defined on $|z|<r$ with the radius of converge $r$. Let $f$ be a function in the Wiener class such that $|f|_{\max}<r$. Let ${A}_{(n,m)} \in \mathbb{C}^{n m \times n m} $ be the BTTB matrix generated by $f$ and $c({A}_{(n,m)}) \in \mathbb{C}^{n m \times n m} $ be the optimal BCCB preconditioner for ${A}_{(n,m)}$. Then, for all $\epsilon > 0$ there exist positive integers $N$ and $M$ such that for all $n > N$ and all $m > M$ \[ h(c({A}_{(n,m)})) - h({{A}_{(n,m)}}) = {R}_{(n,m)} + {E}_{(n,m)},\] where \[ \text{rank}({R}_{(n,m)}) \leq \mathcal{O}(n)+\mathcal{O}(m) \quad \text{and} \quad \|{E}_{(n,m)} \|_2 \leq \epsilon.\]
\end{theorem}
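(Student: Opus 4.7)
The plan is to mirror the argument used for Theorem \ref{thm:mainTAdecomp_hon17}, replacing every single-index Toeplitz quantity by its BTTB analogue and being careful about how the rank bounds scale in two dimensions. First I would record the norm bounds $\|A_{(n,m)}\|_2 \leq |f|_{\max}$ and $\|c(A_{(n,m)})\|_2 \leq |f|_{\max}$, both of which follow from the Wiener class assumption on $f$ together with the fact that the optimal BCCB preconditioner acts (in its spectral representation) as a sampling/averaging of the generating function. Since $|f|_{\max}<r$, this places the spectra of both $A_{(n,m)}$ and $c(A_{(n,m)})$ strictly inside the disc of convergence of $h$, so by Theorem \ref{lem:matrixtaylorseries} the matrix functions $h(A_{(n,m)})$ and $h(c(A_{(n,m)}))$ admit the convergent power series representations $\sum_{k=0}^{\infty} a_k A_{(n,m)}^k$ and $\sum_{k=0}^{\infty} a_k c(A_{(n,m)})^k$.

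Next I would perform the same three-way splitting as in the proof of Theorem \ref{thm:mainTAdecomp_hon17}, namely
\[
h(c(A_{(n,m)})) - h(A_{(n,m)}) = \Delta_{(n,m)}^{(1)} + \Theta_{(n,m)} + \Delta_{(n,m)}^{(2)},
\]
where $\Delta_{(n,m)}^{(1)}$ and $\Delta_{(n,m)}^{(2)}$ are the Taylor tails beyond degree $K$ of $h(c(A_{(n,m)}))$ and $h(A_{(n,m)})$ respectively, and $\Theta_{(n,m)}=\sum_{k=0}^{K} a_k\bigl(c(A_{(n,m)})^k - A_{(n,m)}^k\bigr)$. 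Applying Theorem \ref{lem:matrixtaylorserieserrorbd} together with the uniform norm bound $|f|_{\max}$ gives an estimate of the form
\[
\|\Delta_{(n,m)}^{(1)}\|_2 + \|\Delta_{(n,m)}^{(2)}\|_2 \leq \frac{2\,|f|_{\max}^{K+1}}{(K+1)!}\,m_{(|f|_{\max})},
\]
where $m_{(|f|_{\max})}$ is a convergent series built from the Taylor coefficients (convergence is guaranteed by the ratio test since $|f|_{\max}<r$). This quantity is independent of $n,m$ and tends to zero as $K\to\infty$, so we can fix $K$ once and for all so that this contribution is at most $\epsilon/2$.

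For the middle term $\Theta_{(n,m)}$ I would invoke the BTTB analogue of Theorem \ref{lem:mianTAdecompose}, namely the decomposition
\[
c(A_{(n,m)}) - A_{(n,m)} = V_{(n,m)} + W_{(n,m)}, \qquad \mathrm{rank}(V_{(n,m)}) \leq \mathcal{O}(n)+\mathcal{O}(m), \quad \|W_{(n,m)}\|_2 \leq \epsilon',
\]
which holds for all sufficiently large $n,m$ with $\epsilon'$ arbitrarily small; this is the standard Chan-type splitting for BCCB preconditioners and the rank now grows like $n+m$ because the ``off-corner'' low-rank structure of the single-index case becomes a banded-cross pattern in the BTTB setting. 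Using the telescoping identity
\[
c(A_{(n,m)})^k - A_{(n,m)}^k = \sum_{j=0}^{k-1} c(A_{(n,m)})^j \bigl(V_{(n,m)}+W_{(n,m)}\bigr) A_{(n,m)}^{k-1-j},
\]
I split $\Theta_{(n,m)} = R_{(n,m)} + \Delta_{(n,m)}^{(3)}$ into the $V$-part and the $W$-part. The $W$-part is controlled by $\|W_{(n,m)}\|_2$ times a polynomial-in-$|f|_{\max}$ constant depending on $K$ and the Taylor coefficients, which can be forced below $\epsilon/2$ by choosing $\epsilon'$ small. For the rank of the $V$-part, each summand $c(A_{(n,m)})^j V_{(n,m)} A_{(n,m)}^{k-1-j}$ inherits rank at most $\mathcal{O}(n)+\mathcal{O}(m)$ from $V_{(n,m)}$, and summing $\mathcal{O}(K^2)$ such terms with $K$ fixed keeps the total rank in the class $\mathcal{O}(n)+\mathcal{O}(m)$.

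The main obstacle, and the step most different from the one-dimensional proof, is the BTTB splitting lemma above: getting the correct rank scaling $\mathcal{O}(n)+\mathcal{O}(m)$ rather than a constant requires a careful block-by-block inspection of $c(A_{(n,m)}) - A_{(n,m)}$, treating simultaneously the outer (block) Toeplitz structure and the inner (entry-wise) Toeplitz structure and showing that only boundary blocks together with the boundary rows/columns inside each block carry mass modulo a small-norm remainder. Once that splitting is in hand, the remainder of the argument is a routine transcription of the proof of Theorem \ref{thm:mainTAdecomp_hon17}, with the final choice $E_{(n,m)} := \Delta_{(n,m)}^{(1)} + \Delta_{(n,m)}^{(2)} + \Delta_{(n,m)}^{(3)}$ satisfying $\|E_{(n,m)}\|_2 \leq \epsilon$ for all sufficiently large $n,m$.
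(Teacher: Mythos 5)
Your proposal is correct and follows exactly the route the paper intends: the paper omits this proof, stating it follows by the same arguments as Theorem \ref{thm:mainTAdecomp_hon17}, and your transcription (Taylor truncation via Theorems \ref{lem:matrixtaylorseries} and \ref{lem:matrixtaylorserieserrorbd}, the uniform bounds $\|A_{(n,m)}\|_2,\ \|c(A_{(n,m)})\|_2\leq |f|_{\max}<r$, and the telescoping split of $\Theta_{(n,m)}$ into a low-rank part and a small-norm part) is precisely that argument, with the rank bookkeeping simplified by the fact that $K$ is fixed so an $\mathcal{O}(K^2)$ multiple of $\mathcal{O}(n)+\mathcal{O}(m)$ stays $\mathcal{O}(n)+\mathcal{O}(m)$. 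The one external ingredient you flag, the splitting $c(A_{(n,m)})-A_{(n,m)}=V_{(n,m)}+W_{(n,m)}$ with $\mathrm{rank}(V_{(n,m)})\leq \mathcal{O}(n)+\mathcal{O}(m)$ and $\|W_{(n,m)}\|_2\leq\epsilon$ for $f$ in the Wiener class, is the standard BCCB/BTTB result from the cited literature (and is exactly why the paper's rank bound grows with $n$ and $m$ rather than being constant), so invoking it is legitimate.
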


\begin{corollary}
\label{coro:mainTT_hon17_BTTB}
Suppose $h(z)$ is an analytic function defined on $|z|<r$ with the radius of converge $r$. Let $f$ be a function in the Wiener class such that $|f|_{\max}<r$. Let ${A}_{(n,m)} \in \mathbb{C}^{n m \times n m} $ be the BTTB matrix generated by $f$ and $c({A}_{(n,m)}) \in \mathbb{C}^{n m \times n m} $ be the optimal BCCB preconditioner for ${A}_{(n,m)}$. If $\|h(c({A}_{(n,m)}))^{-1}\|_2$ is uniformly bounded with respect to $n$ and $m$, then for all $\epsilon > 0$ there exist positive integers $N$ and $M$ such that for all $n > N$ and all $m > M$ \[ |h({c({A}_{(n,m)}})|^{-1}h({A}_{(n,m)}) = {Q}_{(n,m)} + \widetilde{R}_{(n,m)} + \widetilde{E}_{(n,m)},\] where $Q_{(n,m)}$ is Hermitian and unitary, \[ \text{rank}(\widetilde{R}_{(n,m)})\leq \mathcal{O}(n)+\mathcal{O}(m), \quad \text{and} \quad \|\widetilde{E}_{(n,m)}\|_2 \leq \epsilon.\]
\end{corollary}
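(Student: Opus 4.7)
The plan is to mirror the argument of Corollary \ref{coro:mainTA_hon17}, adapting the one-level Fourier diagonalisation to the two-level BCCB setting via the tensor-product Fourier matrix $U_n \otimes U_m$. First I would express $|h(c(A_{(n,m)}))|$ as $h(c(A_{(n,m)}))$ times a Hermitian unitary ``sign'' matrix. Since $c(A_{(n,m)})$ is BCCB, Definition \ref{def:BCCBmatrix} gives the simultaneous eigendecompositions $h(c(A_{(n,m)})) = (U_n \otimes U_m)^* h(\Omega_{(n,m)}) (U_n \otimes U_m)$ and $|h(c(A_{(n,m)}))| = (U_n \otimes U_m)^* |h(\Omega_{(n,m)})| (U_n \otimes U_m)$. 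Because $A_{(n,m)}$ is Hermitian and the Chan operator $c(\cdot)$ preserves Hermiticity, $h(c(A_{(n,m)}))$ is Hermitian with real eigenvalues, so letting $\overline{h(\Omega_{(n,m)})}$ be the diagonal matrix of signs of those eigenvalues and setting $Q_{(n,m)} := (U_n \otimes U_m)^* \overline{h(\Omega_{(n,m)})} (U_n \otimes U_m)$ produces a matrix that is simultaneously Hermitian and unitary with $|h(c(A_{(n,m)}))| = h(c(A_{(n,m)})) Q_{(n,m)}$.

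Next, invoking Theorem \ref{thm:mainTAdecomp_hon17_BTTB}, for any $\epsilon > 0$ I would choose $N, M$ so that for all $n>N$ and $m>M$,
\[ h(c(A_{(n,m)})) - h(A_{(n,m)}) = R_{(n,m)} + E_{(n,m)}, \]
with $\text{rank}(R_{(n,m)}) \leq \mathcal{O}(n) + \mathcal{O}(m)$ and $\|E_{(n,m)}\|_2 \leq \epsilon$. Rearranging to $h(c(A_{(n,m)}))^{-1} h(A_{(n,m)}) = I_{nm} - h(c(A_{(n,m)}))^{-1} R_{(n,m)} - h(c(A_{(n,m)}))^{-1} E_{(n,m)}$ and left-multiplying by $Q_{(n,m)}$, while using the identity $Q_{(n,m)} h(c(A_{(n,m)}))^{-1} = |h(c(A_{(n,m)}))|^{-1}$ derived in the previous step, yields the desired decomposition with $\widetilde{R}_{(n,m)} := -Q_{(n,m)} h(c(A_{(n,m)}))^{-1} R_{(n,m)}$ and $\widetilde{E}_{(n,m)} := -Q_{(n,m)} h(c(A_{(n,m)}))^{-1} E_{(n,m)}$.

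To close the bounds, I would note that since $Q_{(n,m)}$ and $h(c(A_{(n,m)}))^{-1}$ are both invertible, $\text{rank}(\widetilde{R}_{(n,m)}) = \text{rank}(R_{(n,m)}) \leq \mathcal{O}(n) + \mathcal{O}(m)$, and the uniform boundedness hypothesis $\|h(c(A_{(n,m)}))^{-1}\|_2 \leq c_0$ together with $\|Q_{(n,m)}\|_2 = 1$ gives $\|\widetilde{E}_{(n,m)}\|_2 \leq c_0 \epsilon$; rescaling $\epsilon \mapsto \epsilon/c_0$ at the outset absorbs the constant. The hard part here is not analytic but structural: one must verify that every ingredient used in the one-dimensional proof of Corollary \ref{coro:mainTA_hon17}---diagonalisation in a Fourier basis, extraction of a commuting sign matrix, Hermiticity preservation by the preconditioner, and rank invariance under left-multiplication by invertible matrices---transfers verbatim to the BCCB setting with $U_n \otimes U_m$ in place of $U_n$. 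Since these are standard properties of BCCB matrices and Theorem \ref{thm:mainTAdecomp_hon17_BTTB} supplies the underlying matrix decomposition, no genuinely new analytic difficulty arises, and the argument is essentially a dimensional translation of Corollary \ref{coro:mainTA_hon17}.
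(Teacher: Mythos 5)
Your proposal is correct and coincides with the paper's intended argument: the paper omits the proof of this corollary precisely because it is the same adaptation of Corollary \ref{coro:mainTA_hon17} that you carry out, replacing $U_n$ by $U_n\otimes U_m$, extracting the Hermitian unitary sign factor $Q_{(n,m)}$ from Definition \ref{def:BCCBmatrix}, and invoking Theorem \ref{thm:mainTAdecomp_hon17_BTTB} together with the uniform bound on $\|h(c(A_{(n,m)}))^{-1}\|_2$. The rank and norm bounds, including the rescaling of $\epsilon$ by the constant $c_0$, are handled exactly as in the one-level case.
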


\begin{corollary}
\label{coro:mainTT_clusters_BTTB}
Suppose $h(z)$ is an analytic function defined on $|z|<r$ with the radius of converge $r$. Let $f$ be a function in the Wiener class such that $|f|_{\max}<r$. Let ${A}_{(n,m)} \in \mathbb{C}^{n m \times n m} $ be the BTTB matrix generated by $f$ and $c({A}_{(n,m)}) \in \mathbb{C}^{n m \times n m} $ be the optimal BCCB preconditioner for ${A}_{(n,m)}$. If $\|h(c({A}_{(n,m)}))^{-1}\|_2$ is uniformly bounded with respect to $n$ and $m$, then for all $\epsilon > 0$ there exist positive integers $N$ and $M$ such that for all $n > N$ and all $m > M$ there are at most $\mathcal{O}(n)+\mathcal{O}(m)$ eigenvalues of $|h({c({A}_{(n,m)})})|^{-1}h({{A}_{(n,m)}})$ have absolute value larger than $\epsilon$.
\end{corollary}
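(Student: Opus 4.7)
The plan is to import the argument of Corollary \ref{coro:mainTA_clusters_hon17} wholesale into the BTTB setting, replacing the constant low-rank bound $2M$ by $\mathcal{O}(n)+\mathcal{O}(m)$ throughout. First I would invoke Corollary \ref{coro:mainTT_hon17_BTTB} to write
\[
|h(c(A_{(n,m)}))|^{-1}h(A_{(n,m)}) = Q_{(n,m)} + \widetilde{R}_{(n,m)} + \widetilde{E}_{(n,m)},
\]
where $Q_{(n,m)}$ is Hermitian and unitary, $\mathrm{rank}(\widetilde{R}_{(n,m)}) \leq \mathcal{O}(n)+\mathcal{O}(m)$, and $\|\widetilde{E}_{(n,m)}\|_2 \leq \epsilon$. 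I would then symmetrise by conjugating with $|h(c(A_{(n,m)}))|^{1/2}$, setting $H_{(n,m)} := |h(c(A_{(n,m)}))|^{-1/2}h(A_{(n,m)})|h(c(A_{(n,m)}))|^{-1/2}$, which is Hermitian since $A_{(n,m)}$ and therefore $h(A_{(n,m)})$ is Hermitian. This conjugation preserves the splitting as $H_{(n,m)} = \overline{Q}_{(n,m)} + \overline{R}_{(n,m)} + \overline{E}_{(n,m)}$, where $\overline{Q}_{(n,m)}$ is unitary and similar to $Q_{(n,m)}$, the rank of $\overline{R}_{(n,m)}$ is unchanged, and $\|\overline{E}_{(n,m)}\|_2$ is bounded by a multiple of $\epsilon$ through the hypothesised uniform control on $\|h(c(A_{(n,m)}))^{-1}\|_2$.

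Next I would apply the Hermitian block-dilation trick from the Toeplitz corollary to remove the non-Hermiticity of $\overline{R}_{(n,m)}$ and $\overline{E}_{(n,m)}$. Setting
\[
\mathcal{H} = \begin{bsmallmatrix} 0 & H_{(n,m)} \\ H_{(n,m)}^{*} & 0 \end{bsmallmatrix} = \mathcal{Q} + \mathcal{R} + \mathcal{E},
\]
with each summand the analogous block dilation of the corresponding piece of $H_{(n,m)}$, I obtain $\mathcal{Q}$ Hermitian and unitary, $\mathrm{rank}(\mathcal{R}) \leq 2\,\mathrm{rank}(\overline{R}_{(n,m)}) = \mathcal{O}(n)+\mathcal{O}(m)$, and $\|\mathcal{E}\|_2 \leq \epsilon$. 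Corollary 3 of \cite{BRANDTS20103100} then implies that at most $2\,\mathrm{rank}(\mathcal{R}) = \mathcal{O}(n)+\mathcal{O}(m)$ eigenvalues of the Hermitian matrix $\mathcal{H}$ lie outside the union of $\epsilon$-neighbourhoods of $\pm 1$.

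Finally I would transfer the conclusion back: the eigenvalues of $\mathcal{H}$ are the singular values of $H_{(n,m)}$ together with their negatives, and since $H_{(n,m)}$ is Hermitian its singular values coincide with the absolute values of its eigenvalues. The clustering of the $\mathcal{H}$-spectrum around $\pm 1$ therefore forces all but $\mathcal{O}(n)+\mathcal{O}(m)$ eigenvalues of $H_{(n,m)}$ to lie within $\epsilon$ of $\pm 1$. As $H_{(n,m)}$ is similar to $|h(c(A_{(n,m)}))|^{-1}h(A_{(n,m)})$, the same count transfers to the preconditioned BTTB matrix, which is the bound asserted in the statement.

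The only delicate point is the rank bookkeeping: Theorem \ref{thm:mainTAdecomp_hon17_BTTB} already carries an $\mathcal{O}(n)+\mathcal{O}(m)$ rank (rather than a constant, as in the Toeplitz case), and this count is multiplied by fixed constants through the symmetrising conjugation, the $2 \times 2$ block dilation, and Brandts' doubling, all of which leave the $\mathcal{O}(n)+\mathcal{O}(m)$ order intact. No new analytic ingredient beyond the Toeplitz proof is needed; the entire work is in verifying that each transformation respects the rank and norm bounds uniformly in $n$ and $m$.
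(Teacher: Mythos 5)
Your proposal is exactly the argument the paper intends: the paper omits the proof, stating that the BTTB corollaries ``can be shown using the similar arguments given in the previous section,'' and you carry out precisely that analogy --- Corollary \ref{coro:mainTT_hon17_BTTB}, symmetrisation by $|h(c(A_{(n,m)}))|^{1/2}$, the Hermitian block dilation, Corollary 3 of \cite{BRANDTS20103100}, and the singular-value/similarity transfer --- with the only change being the rank bound $\mathcal{O}(n)+\mathcal{O}(m)$ in place of $2M$. This matches the paper's (omitted) proof in both structure and bookkeeping, so nothing further is needed.
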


We remark that our results could be further  generalised to functions of multilevel Toeplitz systems. However, negative results were in fact shown by Serra-Capizzano and Tyrtyshnikov \cite{doi:10.1137/S0895479897331941} that circulant type preconditioners are not optimal for multilevel Toeplitz systems in the sense that the spectra of the preconditioned matrices are not tightly clustered, i.e. the number of eigenvalues away $\pm 1$ grow with the dimensions.

\section{Numerical results}

In this section, we demonstrate the effectiveness of our proposed preconditioners using CG, MINRES, and GMRES. Throughout all numerical tests, $e^{A_n}$ is computed by the MATLAB R2016b built-in function \textbf{expm} while the other matrix functions are computed by \textbf{funm}. 
Also, we use the function \textbf{pcg} (or \textbf{minres}) to solve the Hermitian positive definite (or indefinite) system $ h(A_n )\mathbf{x}=\mathbf{b}, $
where $\mathbf{b}$ is generated by the function \textbf{ones(n,1)}, with the zero vector as the initial guess. As a comparison, GMRES is used and is executed by \textbf{gmres}. The stopping criterion used is $$ \frac{\|\mathbf{r}^{(j)}\|_2}{\|\mathbf{b}\|_2} < 10^{-7},$$ where $\mathbf{r}^{(j)}$ is the residual vector after $j$ iterations.

For all tests, the Toeplitz matrix $A_n$ is generated by $$f(
x)=2\sum_{k=0}^{\infty}\frac{\sin{(kx)}+\cos{(kx)}}{(1+k)^{1.1}}$$ in the Wiener class analysed in \cite{MR1098422} unless mentioned otherwise.

\textbf{Example 5.1}. We first consider the Toeplitz matrix exponential $e^{A_n}$. Table \ref{tab:ExpACG_HON17} shows the numbers of iterations needed for convergence using CG and GMRES. It appears that the proposed precondtioners are effective for speeding up the convergence rate. Figure \ref{fig:ExpACG_HON17}  shows the spectrum of the matrix before or after the preconditioner $|e^{T_n}|$ is used at $n=512$. In Figure \ref{fig:ExpACG_HON17} (iii), we observe the highly clustered spectrum around $1$. By \cite{Axelsson1986} for example, a fast convergence rate of CG is expected due to the clustered eigenvalues.

\begin{table}
\caption{Numbers of iterations with (a) CG or (b) GMRES for $e^{A_n}$ given in Example 5.1.}
\label{tab:ExpACG_HON17}
\begin{center}
{
(a)~\begin{tabular}{|l|c|c|c|}\hline
 $n $ & with no preconditioner  & with $|e^{T_n}|$ & with $|e^{S_n}|$
\\
\hline
128 & 34 & 11 & 11\\
256 & 53 & 11 & 11\\
512 & 79 & 11 & 12\\
1024 & 121 & 12 & 13\\
\hline
\end{tabular}\\
\vspace{0.5cm}
(b)~\begin{tabular}{|l|c|c|c|}\hline
 $n $ & with no preconditioner  & with $e^{T_n}$ & with $e^{S_n}$
\\
\hline
128 & 26 & 11 & 11\\
256 & 35 & 11 & 12\\
512 & 46 & 12 & 13\\
1024 & 62 & 12 & 13\\
\hline
\end{tabular}}
\end{center}
\end{table}

\begin{figure}
\centering
(i)~\includegraphics[scale=1.0]{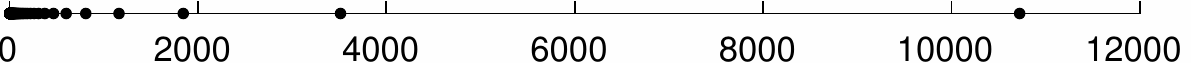}\\
\vspace{0.5cm}
(ii)~\includegraphics[scale=1.0]{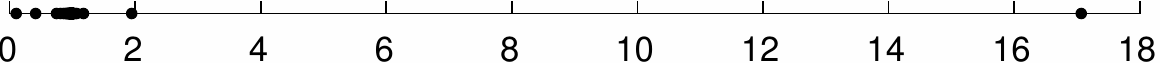}\\
\vspace{0.5cm}
(iii)~\includegraphics[scale=1.0]{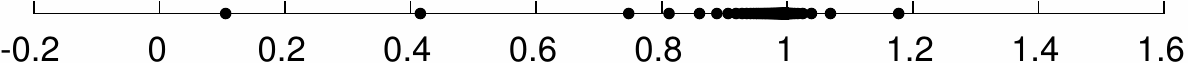}
\caption{Spectrum of $e^{A_n}$ given in Example 5.1 at $n=512$ (i) without a preconditioner or (ii) with the preconditioner $|e^{T_n}|$. (iii) Zoom-in spectrum of (ii).}
\label{fig:ExpACG_HON17}
\end{figure}

\newpage

\textbf{Example 5.2}. Table \ref{tab:cosA_HON17} shows the numerical results using MINRES and GMRES for $\cos{A_n}$. Again, we observe that the preconditioners appear effective for $\cos{A_n}$. In Figure \ref{fig:cosA_HON17}, we further show the spectra of $|\cos{T_n}|^{-1}\cos{A_n}$ at different $n$. We conclude that the highly clusters of eigenvalues around $\pm1$ seem independent of $n$.

\begin{table}[h]
\caption{Numbers of iterations with (a) MINRES or (b) GMRES for $\cos{A_n}$ given in Example 5.2.}
\label{tab:cosA_HON17}
\begin{center}
{
(a)~\begin{tabular}{|l|c|c|c|}\hline
 $n $ & with no preconditioner  & with $|\cos{T_n}|$ & with $|\cos{S_n}|$

\\
\hline
128 & 178 & 29 & 42\\
256 & 412 & 32 & 50\\
512 & 952 & 49 & 46\\
1024 & 2152 & 47 & 48\\
\hline
\end{tabular}\\
\vspace{0.5cm}
(b)~\begin{tabular}{|l|c|c|c|}\hline
 $n $ & with no preconditioner  & with $\cos{T_n}$ & with $\cos{S_n}$

\\
\hline
128 & 128 & 18 & 21\\
256 & 256 & 18 & 20\\
512 & 512 & 21 & 24\\
1024 & 1024 & 21 & 24\\
\hline
\end{tabular}}\\
\end{center}
\end{table}

\begin{figure}
\centering
(i)~\includegraphics[scale=1.0]{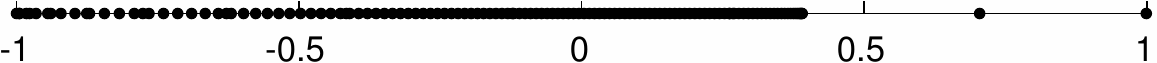}\\
\vspace{0.5cm}
(ii)~\includegraphics[scale=1.0]{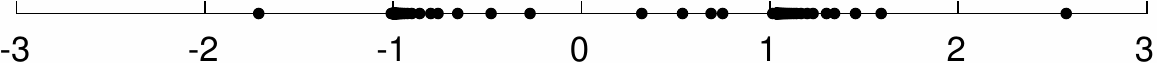}\\
(a) $n=128$\\
\vspace{2cm}
(i)~\includegraphics[scale=1.0]{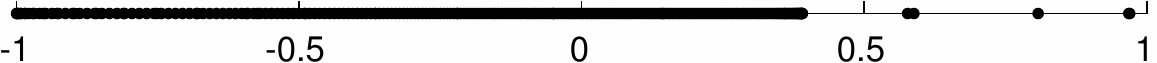}\\
\vspace{0.5cm}
(ii)~\includegraphics[scale=1.0]{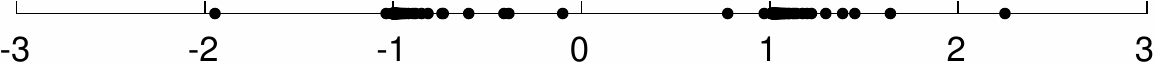}\\
(b) $n=256$\\
\vspace{2cm}
(i)~\includegraphics[scale=1.0]{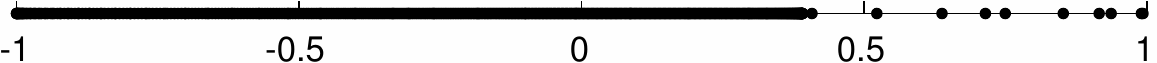}\\
\vspace{0.5cm}
(ii)~\includegraphics[scale=1.0]{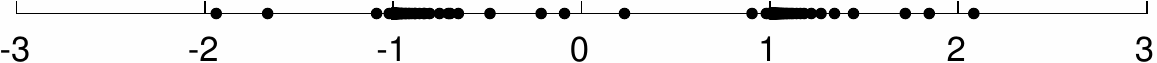}\\
(c) $n=512$\\

\caption{Spectra of $\cos{A_n}$ given in Example 5.2 at different $n$ (i) without a preconditioner or (ii) with the preconditioner $|\cos{T_n}|$. }
\label{fig:cosA_HON17}
\end{figure}

\newpage

\textbf{Example 5.3}. We next consider the system defined by the hyperbolic sine function. Table \ref{tab:sinhA_HON17} shows the numerical results using CG and GMRES for $\sinh{A_n}$. The numbers of iterations needed for convergence are reduced significantly with the proposed preconditioners. In Figure \ref{fig:SsinhACG_HON17}, we observe the cluster around $1$ at $n=512$ when the matrix is preconditioned by $|\sinh{S_n}|$.

\begin{table}[h]
\caption{Numbers of iterations with (a) CG or (b) GMRES for $\sinh{A_n}$ given in Example 5.3.}
\label{tab:sinhA_HON17}
\begin{center}
(a)~\begin{tabular}{|l|c|c|c|}\hline
 $n $ & with no preconditioner  & with $|\sinh{T_n}|$ & with $|\sinh{S_n}|$ 
\\
\hline
128 & 38 & 11 & 11 \\
256 & 57 & 11 & 12\\
512 & 82 & 11 & 12\\
1024 & 129 & 12 & 13 \\
\hline
\end{tabular}\\
\vspace{0.5cm}
(b)~\begin{tabular}{|l|c|c|c|}\hline
 $n $ & with no preconditioner & with $\sinh{T_n}$ & with $\sinh{S_n}$ 
\\
\hline
128 & 27 & 11 & 11 \\
256 & 36 & 11 & 12\\
512 & 47 & 12 & 13\\
1024 & 63 & 12 & 13 \\
\hline
\end{tabular}\\
\vspace{0.5cm}
\end{center}
\end{table}

\begin{figure}[h]
\centering
(i)~\includegraphics[scale=1.0]{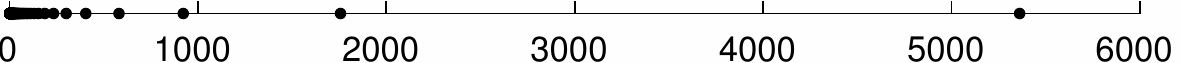}\\
\vspace{0.5cm}
(ii)~\includegraphics[scale=1.0]{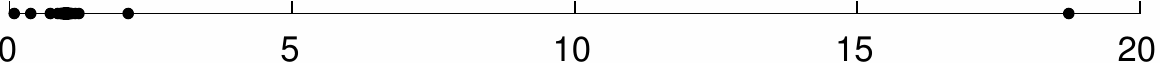}\\
\vspace{0.5cm}
(iii)~\includegraphics[scale=1.0]{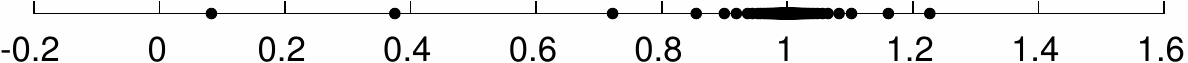}
\caption{Spectrum of $\sinh{A_n}$ given in Example 5.3 at $n=512$ (i) without a preconditioner or (ii) with the preconditioner $|\sinh{S_n}|$. (iii) Zoom-in spectrum of (ii).}
\label{fig:SsinhACG_HON17}
\end{figure}

\newpage

\textbf{Example 5.4}. In this example, we consider the polynomial $h(z)=z^3+z^2+z+1$. Table \ref{tab:polyhA_HON17} shows the numerical results for $h(A_n)$. The numbers of iterations needed for convergence are reduced with the preconditioners $|h(T_n)|$ and $|h(S_n)|$. In Figure \ref{fig:SpolyA_HON17}, we again observe a cluster of eigenvalues around $1$ when the system is preconditioned by $|h(S_n)|$.

\begin{table}[h]
\caption{Numbers of iterations with (a) CG or (b) GMRES for $h(A_n)$ given in Example 5.4.}
\label{tab:polyhA_HON17}
\begin{center}
{
(a)~\begin{tabular}{|l|c|c|c|}\hline
 $n $ & with no preconditioner & with $|h(T_n)|$ & with $|h(S_n)|$ 
\\
\hline
128 & 32 & 9 & 9 \\
256 & 40 & 9 & 9 \\
512 & 50 & 9 & 9 \\
1024 & 63 & 9 & 9 \\
\hline
\end{tabular}\\
\vspace{0.5cm}
(b)~\begin{tabular}{|l|c|c|c|}\hline
 $n $ & with no preconditioner  & with $h(T_n)$ & with $h(S_n)$ 
\\
\hline
128 & 27 & 10 & 10 \\
256 & 35 & 10 & 10 \\
512 & 43 & 9 & 10 \\
1024 & 51 & 10 & 10 \\
\hline
\end{tabular}}
\end{center}
\end{table}

\begin{figure}[h]
\centering
(i)~\includegraphics[scale=1.0]{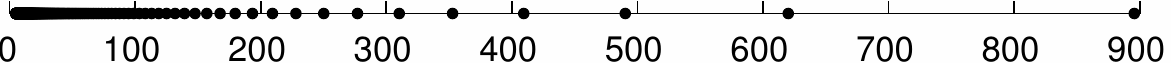}\\
\vspace{0.5cm}
(ii)~\includegraphics[scale=1.0]{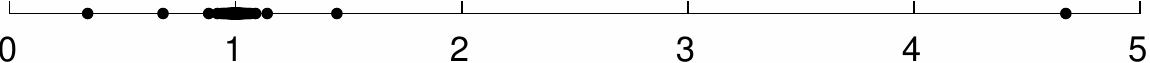}\\
\vspace{0.5cm}
\caption{Spectrum of $h{(A_n)}$ given in Example 5.4 at $n=512$ (i) without a preconditioner or (ii) with the preconditioner $|h{(S_n)}|$.}
\label{fig:SpolyA_HON17}
\end{figure}

\textbf{Example 5.5}. Lastly, we consider the BTTB matrix $A_{(n,m)}$ denoted by
\[
a^{(j)}_{k}=\frac{1}{(|j|+1)^{2.1}+ (|k|+1)^{2.1}}, \quad j,k=0,\pm 1, \pm 2, \dots,
\]
analysed in \cite{MR2376196}. The matrix function in this case is $e^{z}$. 

Table \ref{tab:example_5} illustrates the numerical results with our proposed preconditioners. Even though we observe reduction in iteration counts for both MINRES and GMRES, the eigenvalues of the preconditioned matrix are not tightly clustered as shown in Figure \ref{fig:example_5} due to the negative result mentioned in the previous chapter.

\begin{table}[h]
\caption{Numbers of iterations with (a) MINRES or (b) GMRES for $e^{A_{(n,m)}}$ given in Example 5.5.}
\label{tab:example_5}
\begin{center}
{
(a)~\begin{tabular}{|l|c|c|c|c|}\hline
 $n$ & $m$ & $nm$ & with no preconditioner   & with $|e^{c(A_{(n,m)})}|$

\\
\hline
16 & 8 & 128 & 12 & 8\\
16 & 16 & 256 & 13 & 8\\
32 & 16 & 512 & 22 & 12\\
32 & 32 & 1024& 26 & 11\\
\hline
\end{tabular}\\
\vspace{0.5cm}
(b)~\begin{tabular}{|l|c|c|c|c|}\hline
 $n$ & $m$ & $nm$ & with no preconditioner   & with $e^{c(A_{(n,m)})}$

\\
\hline
16 & 8 & 128 & 11 & 9\\
16 & 16 & 256 & 12 & 9\\
32 & 16 & 512 & 19 & 15\\
32 & 32 & 1024& 23 & 14\\
\hline
\end{tabular}}\\
\end{center}
\end{table}

\begin{figure}[h]
\centering
(i)~\includegraphics[scale=1.0]{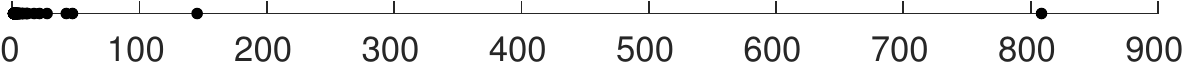}\\
\vspace{0.5cm}
(ii)~\includegraphics[scale=1.0]{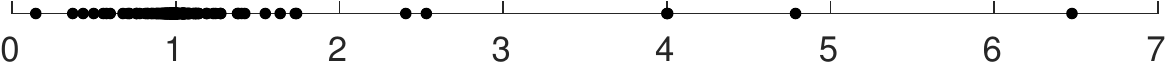}\\

\caption{Spectrum of $e^{A_{(n,m)}}$ given in Example 5.5 at $(n,m)=(32,16)$ (i) without a preconditioner or (ii) with the preconditioner $|e^{c(A_{(n,m)})}|$.} 
\label{fig:example_5}
\end{figure}

\section{Conclusions}

We have proposed the absolute value superoptimal circulant preconditioners $|h(T_n)|$ for analytic functions of Toeplitz matrices $h(A_n)$, where $A_n$ is generated by a positive function in the Wiener class. Also, we have provided several theorems that explain the effectiveness of such preconditioners. Specifically, we have shown that $|h(T_n)|^{-1}h(A_n)$ has clustered spectra around $\pm 1$, even though both the low rank matrix and the small norm matrix in the related matrix decomposition are not Hermitian. Furthermore, we have shown that the absolute value optimal BCCB preconditioners $|h(c(A_{(n,m)}))|$ are effective for functions of BTTB systems $h(A_{(n,m)})$, where $A_{(n,m)}$ is generated by a function in the Wiener class.

A number of numerical examples concerning different $h$ have been provided. In each of the examples, we observe a significant improvement for convergence and the expected clusters of eigenvalues with our proposed preconditioners. Although in some example GMRES with $h(T_n)$ (or $h(S_n)$) requires fewer iterations than MINRES with $|h(T_n)|$ (or $|h(S_n)|$), it is not necessarily reduction in work as the cost of GMRES increases for every iteration whereas MINRES has a constant cost per iteration. 

Our results show that the superoptimal circulant preconditioners as well as the optimal circulant preconditioners are effective for at least two special classes of matrices: functions of Toeplitz matrices and those of BTTB matrices. These matrices as examples extend the applicability of optimal type preconditioners in  the context of preconditioning for functions of matrices addressed by the authors in \cite{Jin2014224,bai2015}. It would not be inconceivable for such preconditioners to also work well for other Toeplitz-related systems. However, as for other more general matrices, the success of such circulant preconditioners is less obvious.




\section*{References}
\bibliographystyle{plain}
\bibliography{SeanReferences}

\end{document}